\tikzset{
  commutative diagrams/.cd, 
  arrow style=tikz, 
  diagrams={>=stealth}
}
\newenvironment{customthm}[1]
  {\innercustomthm}
  {\endinnercustomthm}
\def\@tocline#1#2#3#4#5#6#7{\relax
  \ifnum #1>\c@tocdepth 
  \else
    \par \addpenalty\@secpenalty\addvspace{#2}%
    \begingroup \hyphenpenalty\@M
    \@ifempty{#4}{%
      \@tempdima\csname r@tocindent\number#1\endcsname\relax
    }{%
      \@tempdima#4\relax
    }%
    \parindent\z@ \leftskip#3\relax \advance\leftskip\@tempdima\relax
    \rightskip\@pnumwidth plus4em \parfillskip-\@pnumwidth
    #5\leavevmode\hskip-\@tempdima
      \ifcase #1
       \or\or \hskip 1em \or \hskip 2em \else \hskip 3em \fi%
      #6\nobreak\relax
    \dotfill\hbox to\@pnumwidth{\@tocpagenum{#7}}\par
    \nobreak
    \endgroup
  \fi}
\newcounter{marginnote}
\DeclareMathAlphabet{\mathpzc}{OT1}{pzc}{m}{it}
\newtheorem{theorem}{Theorem}[subsection]
\newtheorem{lemma}[theorem]{Lemma}
\newtheorem{proposition}[theorem]{Proposition}
\newtheorem{quasi-theorem}[theorem]{Quasi-Theorem}
\theoremstyle{definition}
\newtheorem{definition}[theorem]{Definition}
\newtheorem{remark}[theorem]{Remark}
\newtheorem{example}[theorem]{Example}
\newtheorem{blank remark}[theorem]{}
\newtheorem{not1}[theorem]{Notation}
\newcommand{\CC} {{\mathbb C}}
\newcommand{\PP}{\mathbb{P}}         
\newcommand{\QQ} {{\mathbb Q}}		
\newcommand{\RR} {{\mathbb R}}		
\newcommand{\ZZ} {{\mathbb Z}}		
\newcommand{\Hom}{\operatorname{Hom}}
\DeclareMathOperator{\spec}{Spec}
\DeclareMathOperator{\ch}{ch}
\newcommand{\cal}{\mathcal}
\def\cM{{\cal M}}
\def\cO{{\cal O}}
\def\trop{\mathrm{trop}}
\newcommand{\tropC}{\Gamma}
\definecolor{lightgreen}{HTML}{E8F9E2}
\definecolor{lightblue}{HTML}{ADD8E6}
\definecolor{lightcyan}{HTML}{E0FFFF}
\definecolor{lightred}{HTML}{ffcccb}
\definecolor{lightpink}{HTML}{ffe6e6}
\def\blfootnote{\xdef\@thefnmark{}\@footnotetext}
\title{Constructions of superabundant tropical curves in higher genus}
\date{}
\author{Sae Koyama}
\address{Sae Koyama \\ Department of Pure Mathematics {\it \&} Mathematical Statistics\\
University of Cambridge, Cambridge, UK}
\email{\href{mailto:sk2050@cam.ac.uk}{sk2050@cam.ac.uk}}
\subjclass[2020]{14T15}
\begin{document}

\maketitle

\begin{abstract}
    We construct qualitatively new examples of superabundant tropical curves which are non-realizable in genus $3$ and $4$. These curves are in $\RR^3$ and $\RR^4$ respectively, and have properties resembling canonical embeddings of genus $3$ and $4$ algebraic curves. In particular, the genus $3$ example is a degree $4$ planar tropical curve, and the genus $4$ example is contained in the product of a tropical line and a tropical conic. They have excess dimension of deformation space equal to $1$. Non-realizability follows by combining this with a dimension calculation for the corresponding space of logarithmic curves.  
\end{abstract}

\section*{Introduction}

The question of tropical realizability asks whether a given parametrized tropical curve can be realized as the troplicalization of an algebraic curve, which is important for ``lifting theorems'' that relate tropical curves to algebraic ones. This question and its variants have been studied by a number of researchers using different methods \cite{BPR16, CFPU, JR21, KatLift, MUW17, Ni09, NS06, R16, RSW17B, Sp07, tyomkin10}. 
Realizability theorems have implications for curve counting and existence theorems in Brill--Noether theory \cite{JR21, Mi03}, whilst the \emph{non}-existence of certain tropical divisors has deep implications about the non-existence of corresponding special divisors \cite{CDPE10}. It is hoped that a better understanding of which tropical curves are non-realizable will allow for wider usage of tropical techniques.

Here we consider the following question:

\begin{center}
    (Q) What do non-realizable curves ``look like''?    
\end{center}

In particular, we study the geometric conditions for a tropical curve to be \emph{superabundant}. Given a tropical curve $\Gamma$ in $\RR^r$, there is an associated deformation space $\mathsf{Def}(\Gamma)$ of parametrized tropical curves with the same combinatorial type. A tropical curve is said to be superabundant when this space of deformations is larger than expected. Realizability in the non-superabundant case was established by Cheung--Fantini--Park--Ulirsch \cite{CFPU}. However, superabundancy is not sufficient for non-realizability, and there exist superabundant combinatorial types where every curve is realizable (see Section \ref{sec:bigalg}). In general, all that is known about the realizability problem for such curves is that there is a closed, conical subset of the deformation space which parametrizes the realizable curves \cite{R16}.  

The simplest type of superabundancy for a tropical curve $\Gamma \rightarrow \RR^r$ is \emph{planar superabundancy}, where a cycle of $\Gamma$ is contained inside an affine hyperplane. This case is well-known, and realizability has been studied \cite{BPR16, KatLift, R16, RSW17B, Sp07}. Every genus $1$ tropical curve is either non-superabundant or planar superabundant. A different type of superabundancy in genus $2$ is the so-called ``triple tuning fork'' (see Figure \ref{fig:tuning-fork}). Its deformation space has dimension $3$, but the expected dimension is $2$. 

\begin{figure}[h]
    \centering
    \begin{tikzpicture}
        \filldraw [black] (-0.1, -0.05) circle (2pt);
        \filldraw [black] (0.1, 0.08) circle (2pt);
        \filldraw [black] (1, 0) circle (2pt); 
        \filldraw [black] (0, 1) circle (2pt); 
        \filldraw [black] (-0.8, -0.8) circle (2pt);
        \draw (-0.05,-0.05) -- (1, -0.05);
        \draw (-0.07,-0.05) -- (-0.07, 1);
        \draw (-0.1,-0.05) -- (-0.8,-0.75);
        \draw (0.1, 0.05) -- (1, 0.05);
        \draw (0.05, 0.05) -- (0.05, 1);
        \draw (0.1, 0.05) -- (-0.75, -0.8);
        \draw (-0.8, -0.8) -- (-1.3,-1.3);
        \draw (1, 0) -- (2, 0);
        \draw (0, 1) -- (0, 2);
    \end{tikzpicture}
    \caption{The triple tuning fork in $\RR^2$.}
    \label{fig:tuning-fork}
\end{figure}
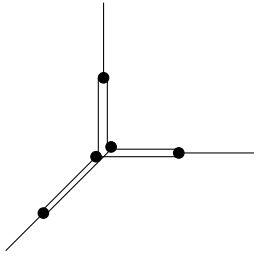

Our main result is that there exists fundamentally new types of superabundancy in genus three and four, which gives rise to tropical curves whose generic deformation is non-realizable. These are qualitatively new examples in the following sense. The notion of superabundancy can be extended to any pair $(\Gamma, \varphi)$ where $\varphi: \Gamma \rightarrow \RR^r$ is a piecewise linear map from a graph $\Gamma$,  which in general may not be balanced (see Definition \ref{def:subcurve}). A superabundant tropical curve of genus $g$ will have \emph{irreducible} superabundancy if the minimal superabundant \emph{subcurve} has genus $g$ (see Definition \ref{defirred}). Another sense in which superabundant phenomena may be new is \emph{indecomposable} superabundancy, where we ask that every projection produces a non-superabundant curve (see Defintion \ref{defindecomp}).  

Our first example is in genus $3$. In Section~\ref{sec: genus-3}, we construct a trivalent parameterized tropical curve $\varphi_3: \Gamma_3\to \RR^3$ with the following two features: 
\begin{enumerate}[(i)]
    \item the image of $\Gamma_3$ is contained in a standard tropical plane in $\RR^3$;
    \item the image of $\varphi_3$ is a curve of degree $4$ in this tropical plane\footnote{The notion of degree can be taken to be an elementary one here: the intersection of $\Gamma_3$ with the generic translate of the standard tropical plane consists of $4$ points.}.
\end{enumerate}
Moreover, the deformation space of this tropical curve has dimension 1 larger than expected. 

In Section~\ref{sec: genus-4}, we construct a trivalent parameterized tropical curve $\varphi_4:\Gamma_4\to \RR^4$ with the following features: 
\begin{enumerate}[(i)]
    \item the image of $\Gamma_4$ is contained in a product $L^\trop\times Q^\trop\subset \mathbb R^4$, where $L^\trop$ is a tropical line in $\RR^2$ and $Q^\trop$ is a tropical conic in $\RR^2$;
    \item the projections to $L^\trop$ and $Q^\trop$ are each harmonic of degree $3$. 
\end{enumerate}Again, the deformation space has a dimension 1 larger than expected. 

The \emph{core neighbourhood} of a genus $g$ tropical curve $\Gamma \rightarrow \RR^r$ is the minimal subgraph of genus $g$. A tropical curve is superabundant if and only if its core neighbourhood is (see Section \ref{sec:abundancy}). 

\begin{customthm}{A}\label{thmA}
If $\varphi: \Gamma \rightarrow \RR^g$ is a genus $g = 3$ (resp. $g = 4$) parametrised tropical curve which contains $\varphi_3$ (resp. $\varphi_4$) as a core neighbourhood, then \begin{enumerate}
    \item $\varphi$ has irreducible superabundancy;
    \item if $\varphi$ has degree\footnote{By degree here we mean the tropical curve has $d$ legs in each of the directions $e_i$ and $-\sum e_i$.} at least 5 (resp. 7), then a generic deformation of $\varphi$ is not realizable. 
\end{enumerate} 

Moreover, if $\varphi$ contains $\varphi_3$ as a core neighbourhood, then it has indecomposable superabundancy as well. 
\end{customthm}

Superabundancy is not sufficient for non-realizability since the algebraic moduli space may also have larger than expected dimension. Some degree condition is necessary to control the dimension of the moduli space of algebraic curves.

We also outline a general method for finding superabundant curves of any genus. In this process, we will see that genus $g$ superabundancy belongs to $\RR^g$, in the sense that given any superabundant genus $g$ parametrized tropical curve in 
$\RR^d$ for $d>g$, there exists a projection to $\RR^g$ whose image is superabundant (see Proposition \ref{prop1}). These fall into a more general class of rational transformations, which can be used to cut down the search space and classify all types of superabundancy in genus two and three.

The genus three and four curves have features which mirror that of canonical special linear divisors. For genus $2$, the ``triple tuning fork'' has a degree $2$ map to the tropical line\footnote{For the balancing condition to hold, the edges of infinite length must have weight $2$.}, which is a collection of three rays in directions $e_1, e_2$ and $-e_1-e_2$. Analogously, every smooth genus $2$ algebraic curve is hyperelliptic. An exact analogy to superabundant, genus $2$ tropical curves in $\RR^2$ is not possible, since the tropical curve may be contained in the intersection of planes:
$$\{x=0\} \cap \{y=0\} \cap \{x=y\}$$ 
This is a tropical line with ``linear parts extended'' (see Figure \ref{fig:tropline}). 

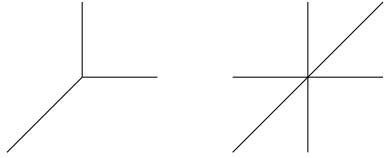
\begin{figure}[h]
    \centering
    \begin{tikzpicture}
        \draw (-1, 1) -- (-1, 0);
        \draw (-1, 0) -- (0, 0);
        \draw (-2, -1) -- (-1, 0);
        \draw (1, 0) -- (3, 0); 
        \draw (1, -1) -- (3, 1);
        \draw (2, -1) -- (2, 1);
    \end{tikzpicture}
    \caption{The tropical line and the tropical line with linear parts extended.}
    \label{fig:tropline}
\end{figure}

\begin{customthm}{B} \label{thmB}
Let $\varphi:\Gamma\to \RR^2$ be a parameterized tropical curve of genus $2$. If $(\Gamma,\varphi)$ is superabundant then either:
	\begin{enumerate}[(i)]
		\item $(\tropC,\varphi)$ is planar superabundant, i.e. a cycle of $\Gamma$ is contained in an affine line in $\RR^2$;
		\item $(\tropC,\varphi)$ may be transformed by a rational dilation into a  tropical curve $(\tropC',\varphi')$, where $(\tropC', \varphi')$ has three segments contained in the lines $\{x=0\}, \{y=0\}$, and $\{x=y\}$ respectively. 
		\end{enumerate}
Conversely, any tropical curve satisfying one of the two conditions above is superabundant. 		
\end{customthm}

A \emph{segment} is defined in Section \ref{sec:abundancy} and is roughly the parts of the underlying graph that make up the cycles. This second condition is a modification of the tuning fork, which demonstrates some kind of ``hyperelliptic'' behaviour. The result mirrors that of special linear series of genus $2$ algebraic curves, which are either canonical or trivial. Extrapolating from our genus three and four examples, we conjecture the existence of ``canoncial'' superabundancy for higher genus curves, with excess dimension of the deformation space equal to $1$.

\subsection*{Motivation and Prior Work} 

Tropical realizability is a fundamental question when using tropical techniques to study algebraic objects. Mikhalkin first studied realizability in the case of smooth (in particular, non-superabundant), nodal, planar curves and used this to obtain results in curve counting \cite{Mi03}. Tropical analogues of divisors have led to a new proof of the Brill--Noether theorem by showing that certain algebraic objects cannot exist because of the non-existence of the corresponding tropical object \cite{CDPE10}. In the other direction, realizability results can be used to transfer tropical constructions to algebraic ones and this approach has led to new results  \cite{JR21}. Another variant of the realizability problem is that of tropical double ramification cycles \cite{UZ19}. 

One approach to the realizability problem is logarithmic deformation theory, which was laid out by 
Nishinou--Siebert \cite{NS06}. The realizability results of Cheung--Fantini--Park--Ulirsch expanded on this setup \cite{CFPU}. In genus $0$, all tropical curves are non-superabundant, hence realizable. In genus $1$, much work has been done to create a full picture, including other approaches such as non-archimedean geometry \cite{BPR16}. The superabundant curves are planar superabundant, and realizable if and only if they satisfy the \emph{well-spacedness} condition \cite{KatLift, R16, RSW17B, Sp07}.

Extensions of these techniques to some higher genus cases have been studied \cite{Ni09}. A promising direction is via Hurwitz covers \cite{ABBR15, CMR14a}, which reduces certain cases to tropical modifications of realizable ones. 
Superabundancy phenomena propagate to higher genera so lower genus cases can give families of realizable or non-realizable curves. 
However, our results suggest that there are new types of superabundacy in every genus. For higher genera, even the combinatorial conditions for superabundancy become more complicated and are not very well understood.  

We speculate that the ``canonical'' superabundant curves identified up to genus $4$ will have realizability conditions analagous to the well-spacedness condition. In particular, the excess dimension of the deformation space is $1$, so we expect these to have a single condition on edge lengths determining realizability. There may be an interesting connection with the realizability of tropical \emph{canonical} divisors which have been studied by M{\"o}eller--Ulirsch--Werner \cite{MUW17}.

\subsection*{Outline}

Background is given in Section \ref{section1}, with discussion of deformation spaces and superabundancy. Section \ref{section2} discusses superabundancy in genus $2$ and $3$. In Section \ref{section3}, we demonstrate a genus $4$ superabundant example. In Section \ref{sec:4}, we give an example of a superabundant tropical curve which is realizable, and show that the genus $3$, $4$ examples give non-realizable curves. We work over an algebraically closed field $k$ of characteristic $0$ throughout, but most the results will extend to positive characteristic. 

\subsection*{Acknowledgements} The main body of this paper was developed during a project for the Cambridge Summer Research in Mathematics programme in 2021. I thank my supervisor, Dhruv Ranganathan, for his continued support and insightful conversations, as well as everyone who was involved with the programme for creating an enjoyable and productive atmosphere. Many thanks to the anonymous referee who gave a careful reading and many helpful comments.

\section{Superabundant tropical curves} \label{section1}

\subsection{Basic definitions} 

We summarise some basic notions, largely following definitions and termininology given in \cite{CFPU}. 

\begin{definition} 
     An \emph{abstract tropical curve} $\Gamma = (G, m, \ell)$ consists of the following data: a (finite) graph $G$, a \emph{marking function} $m:  \{1, \ldots n\} \rightarrow V(G)$ to the vertex set of $G$, and a \emph{length function} $\ell:E(G) \rightarrow \RR_{>0}$ from the edge set of $G$. 
     
     The curve $\Gamma$ can be given the structure of a metric space by giving each edge $e$ of $G$ the positive real edge-length $\ell(e)$ to form $G^{\text{met}}$, then defining

     $$\Gamma^{\text{met}}=(G^{\text{met}} \amalg \coprod_{i \in \{1, \ldots n\} }\RR_{\geq0}) / \sim$$
     where the equivalence relation glues each $0$ to the vertex $m(i)$. The edges of infinite length corresponding to markings are  called \emph{legs}. We will often refer to $\Gamma^{\text{met}}$ as $\Gamma$. 
\end{definition}

As a metric space, the tropical curve $\Gamma$ does not remember the underlying graph structure, in particular the existence of valence two vertices. However, every point on $\Gamma$ has a well-defined valence and for any $\Gamma$ we can pick out a finite vertex set. We often consider tropical curves whose vertices have valence $2$ or $3$. Here we do not include genus at vertices, although higher genus vertices are of interest - it has been shown that realizable tropical curves with higher genus vertices give rise to superabundant genus zero vertex tropical curves \cite{MR19}. 

The identification of the edges of $\Gamma$ with real intervals allows us to define \emph{piecewise linear functions} on $\Gamma$ as continuous maps $\Gamma \rightarrow \RR^r$ which restrict to affine linear functions on the edges. 

\begin{definition}
    A \emph{parametrized tropical curve} is a pair $(\Gamma, \varphi)$ where $\Gamma$ is an abstract tropical curve, and $\varphi$ is a piecewise linear function with integer slopes, satisfying the following balancing condition: at every point in $\Gamma$, the sum of the directional derivatives of $\varphi$ is 0. 
\end{definition}

Note that the balancing conditions implies that in the neighbourhood of a $2$--valent vertex, the map $\varphi$ is linear. 

\begin{remark}
    Sometimes one imposes that each edge $e$ has a \emph{primitive} integral slope $u_e$. Then the directional derivative of $\varphi$ along each edge is an integer multiple $w_e$ of $u_e$, which is the \emph{weight} associated to this edge. We will mostly not be concerned with this. 
\end{remark}

Let $\varphi: \Gamma\to \RR^r$ be a parameterized tropical curve, with $\Gamma$ trivalent. Its \textit{combinatorial type} consists of the data obtained by dropping the edge lengths.  More formally:

\begin{enumerate}[(i)]
\item The finite graph (with one-valence vertices removed) $|\Gamma|$ underlying $\Gamma$ obtained from the minimal vertex set. 
\item For each directed edge of $|\Gamma|$, the edge direction in $\RR^r$. Equivalently, the tuple of slopes of $\varphi$ along the directed edge. The tuple is required to be negated when the orientation is reversed.
\end{enumerate}

\subsection{The abundancy matrix}  \label{sec:abundancy}

Given a combinatorial type $\Theta$, the group $\RR^r$ acts on the set of all parametrised tropical curves with this combinatorial type by translation. The quotient set is a rational polyhedral cone $\sigma_\Theta$ \cite{CFPU,R16}. The dimension of this cone can be calculated using the abundancy map \cite[Section 4]{CFPU}. 

\begin{definition} \cite[Definition 4.1]{CFPU}
The \textit{abundancy map} associated to a tropical curve $\Gamma$ is the linear map
\[
\Phi_\Gamma: \RR^{E}\to \Hom(H_1(\Gamma),\RR^r)
\] 
sending a tuple of real numbers associated to edges to their weighted displacement in $\RR^r$ given by the formula:
\[
(\ell_e)\mapsto \left(\sum_{e\in E(G)} a_e[e] \mapsto \sum_{e\in E(G)} \ell_e a_e\vec e\right)
\]
where $\vec e$ is the direction vector of the edge $e$.
\end{definition}

The abundancy map captures a failure of cycles to close up when arbitrary edge lengths are chosen. If the abundancy map is surjective, then $\Gamma$ is non-superabundant, and the dimension of the moduli space can be calculated as the dimension of the kernel of this map \cite{CFPU, KatLift, Mi03}. Roughly, one expects a codimension $n$ condition for each cycle in $\Gamma$. We reproduce this result by choosing basis for the cycle space and writing in coordinates. 

\begin{proposition}
	Fix a trivalent combinatorial type $\Theta$ as above, then
	\begin{equation*}
		\dim \sigma_\Theta \geq n+(r-3)(1-g)-r
	\end{equation*}
\end{proposition}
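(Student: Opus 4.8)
The plan is to realize $\sigma_\Theta$ explicitly as a polyhedral cone in the space of edge lengths and then read off its dimension from the abundancy matrix by rank--nullity. First I would note that, once the combinatorial type $\Theta$ is fixed, a parametrized tropical curve of type $\Theta$ is determined by the bounded edge lengths $\ell = (\ell_e) \in \RR_{>0}^{E}$ together with the position of a single root vertex, by propagating along the fixed edge directions $\vec e$; here $E$ denotes the number of bounded edges, the legs being rays of fixed direction that carry no length parameter. The only obstruction to this reconstruction being well defined is that the total displacement around each cycle must vanish, which for a class $\sum_e a_e[e] \in H_1(\Gamma)$ is precisely $\sum_e \ell_e a_e \vec e = 0$; that is, $\ell \in \ker \Phi_\Gamma$. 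Quotienting by the translation action of $\RR^r$ removes exactly the root-vertex position, so $\sigma_\Theta$ is identified with the cone $\ker\Phi_\Gamma \cap \RR_{\ge 0}^{E}$. Since the given curve provides an interior point with all $\ell_e > 0$, this cone is full dimensional inside $\ker\Phi_\Gamma$, and hence $\dim \sigma_\Theta = \dim \ker \Phi_\Gamma$.

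Next I would carry out the combinatorial bookkeeping. Writing $V$ for the number of vertices, trivalence gives $3V = 2E + n$ (each bounded edge is incident to two vertices, each of the $n$ legs to one), while connectedness and the definition of genus give $g = \dim H_1(\Gamma) = E - V + 1$. Eliminating $V$ yields $E = 3g - 3 + n$. On the target side, $H_1(\Gamma) \cong \ZZ^{g}$, so $\dim_\RR \Hom(H_1(\Gamma), \RR^r) = gr$.

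I would then choose a basis $\gamma_1, \dots, \gamma_g$ of the cycle space $H_1(\Gamma)$ and write $\Phi_\Gamma$ in coordinates as the abundancy matrix, a $(gr) \times E$ matrix whose entries record the vectors $a_e \vec e \in \RR^r$. Its rank is at most $\dim \Hom(H_1(\Gamma), \RR^r) = gr$, so by rank--nullity
\[
\dim \ker \Phi_\Gamma \;=\; E - \rank \Phi_\Gamma \;\ge\; E - gr .
\]
Combining with the identification above and substituting $E = 3g - 3 + n$ gives
\[
\dim \sigma_\Theta \;\ge\; (3g - 3 + n) - gr \;=\; n + (r-3)(1-g) - r,
\]
which is the claimed inequality.

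The routine parts are the two Euler-characteristic identities and the rank bound. The step that needs genuine care, and which I expect to be the main obstacle, is the first one: justifying that $\sigma_\Theta$ really is cut out inside $\RR_{\ge0}^{E}$ by exactly the equations $\Phi_\Gamma(\ell) = 0$, with the translation action accounting for all remaining redundancy and the legs contributing no further parameters. Once that identification is secured the dimension count is immediate; the inequality (rather than equality) is precisely the room left when $\Phi_\Gamma$ fails to be surjective, i.e. when $\Gamma$ is superabundant.
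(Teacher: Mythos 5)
Your proposal is correct and follows essentially the same route as the paper: identify the deformation space (modulo translation) with the solution set of the cycle-closing conditions, i.e.\ the kernel of the abundancy map written as a $(gr)\times E$ matrix, bound its dimension below by $E - gr$ via rank--nullity, and substitute the Euler-characteristic count $E = n + 3g - 3$ coming from trivalence. Your write-up is in fact slightly more careful than the paper's (you justify the identification via root-vertex propagation and note the cone is full-dimensional in the kernel, and you state the inequality in the correct direction, whereas the paper's text contains an ``at most'' that should read ``at least'').
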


\begin{proof}
	Suppose $\Gamma$ has $b$ edges, labelled $e_1,\ldots,e_b$. The map $\varphi$ is a vector valued linear function on each edge, and fixing $e$ as such an edge, we can write $$\varphi(x) = \mathbf{w_e}x+\mathbf{c}$$ on $e$. Choose a basis of $g$ simple cycles in $\Gamma$. Let 
	$$\eta_{g,i} = \begin{cases}
	1 &\text{ if $e_i$ in cycle $g$ and direction preserved} \\
	-1 &\text{ if $e_i$ in cycle $g$ and direction reversed} \\
	0 &\text{ otherwise } 
	\end{cases}$$
	We define the \emph{abundancy matrix} as the following block matrix:
	$$ K = 
		\begin{pmatrix}
		\eta_{1,1} \mathbf{w}_{e_1} & \ldots & \eta_{1,b} \mathbf{w}_{e_b}\\
		\vdots & \ddots & \vdots \\
		\eta_{g,1} \mathbf{w}_{e_1} & \ldots & \eta_{g,b} \mathbf{w}_{e_b}
	\end{pmatrix}
	$$
	We wish to determine whether a given point $\mathbf{l} \in \RR^b$ lies is an assignment of edge lengths for which the continuous map $\varphi$ exists. Since every cycle must close up, we require that for every row $\mathbf{k}_j$ of K, the dot product $\mathbf{k}_j \cdot \mathbf{l}$ must vanish. Since we started with a basis of simple cycles, this is a necessary and sufficient condition.
	 
	There are $rg$ rows in the matrix, hence the dimension of $\sigma_\Theta$ is at most $b-rg$. The result follows from noticing that by an Euler characteristic calculation $b = n+3g-3$. 
\end{proof}

If $\dim \sigma_\Theta$ is larger than this lower bound, we say the tropical curve with combinatorial type $\Theta$ is \emph{superabundant}. This occurs precisely when the rows of $K$, namely the vectors $\mathbf{k}_j$ in the proof, are not linearly independent. 

Superabundancy is determined only by information on the edges which form cycles. To simplify the picture, it will be useful to pass to subgraphs.

\begin{definition}\label{def:subcurve}
	Suppose $(\Gamma, \varphi)$ is a parametrised tropical curve with $\Gamma = (G, m, \ell)$. Then a \emph{subcurve} of $\Gamma$ is \begin{enumerate}[(i)]
	    \item an abstract tropical curve $\Gamma' = (G', m', \ell')$, where $G'$ is a subgraph of $G$, $m' : I \subset \{1, \ldots n\} \rightarrow V(G')$ is a restriction of $m$, and $\ell': E(G') \rightarrow \RR_{>0}$ is a restriction of $\ell$;
     \item a (piecewise linear) function $\varphi' : \Gamma' \rightarrow \RR^r$ such that $\varphi' = \varphi|_{\Gamma'}$.
	\end{enumerate}
 
    Here $\Gamma'$ can be considered as a subspace of $\Gamma$ in the natural way. 
\end{definition}

A subcurve is not necessarily a tropical curve, since the balancing condition may not hold. Its deformation space can nonetheless be defined exactly as above.

\begin{definition} 
	A subcurve of a tropical curve in $\RR^r$ is \emph{superabundant} if the dimension of the deformation space (modulo translations) is greater than $b-rg$, where $b$ is the number of bounded edges and $g$ is the genus of the subcurve. 
\end{definition}

The superabundancy condition can be seen geometrically by considering a simplification procedure of the underlying graph $G$ of the tropical curve. After choosing a basis of simple cycles for the first homology of $G$, delete edges that are not in any basis of cycles and then delete any vertices that now have valence $0$. We may now pass to the underlying topological space and then equip it with its \textit{minimal} graph structure. The result is called the \textit{smoothing} of the graph. It is straightforward to see that this is independent of the choice of basis for the cycle space (see Figure~\ref{fig:smoothing}).

\begin{figure}[h]
    \centering
    \begin{tikzpicture}[scale=0.6]
    \coordinate (1) at (-5, 0);
    \coordinate (2) at (-6.5, 0);
    \coordinate (3) at (-7.5, 0.9);
    \coordinate (4) at (-9, 2);
    \coordinate (5) at (-8, -1);
    \coordinate (6) at (-6.8, -1);
    \coordinate (7) at (-5.5, -2.5);
    \coordinate (8) at (-4, -1);
    \coordinate (a) at (-4.5, 1) ;
    \coordinate (b) at (-8, 2) ;
    \coordinate (c) at (-9, -2) ;
    \coordinate (d) at (-5.5, -4) ;
    \coordinate (e) at (-3, -1) ;

    \draw[blue] (1) -- (2) ;
    \draw[red] (2) -- (3) ;
    \draw[red] (3) -- (5) ;
    \draw[red] (5) -- (6) ;
    \draw[blue] (6) -- (7) ;
    \draw[blue] (7) -- (8) ;
    \draw[blue] (8) -- (1) ;
    \draw[red] (2) -- (6) ;
    \draw (1) -- (a) ;
    \draw (3) -- (b) ;
    \draw (5) -- (c) ;
    \draw (7) -- (d) ;
    \draw (8) -- (e) ;

    \foreach \x in {(1), (2), (3), (5), (6), (7), (8)}{
        \fill \x circle[radius=2pt];
    }
    
    \draw[red] (2,-1) arc (0:180:1);
    \draw[blue] (0,-1) arc (180:360:1);
    \draw[red] (2, -1) -- (0, -1);
    \fill (0,-1) circle[radius=2pt];
    \fill (2,-1) circle[radius=2pt];

    \draw[->] (-2, -1) -- (-1, -1);
\end{tikzpicture}
    \caption{Smoothing of a graph.}
    \label{fig:smoothing}
\end{figure}
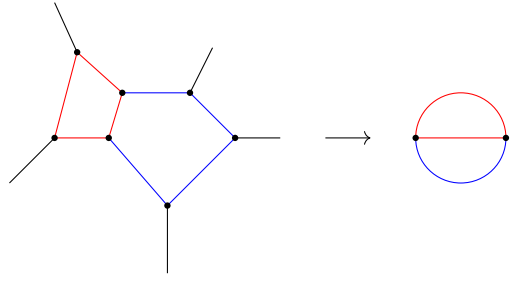

Edges that are not in any cycle of the basis cycles correspond to columns of the matrix $K$ which contain only zeros, which may be disregarded. Group the remaining edges so that each group is mapped to the same edge in the smoothed graph - call these \emph{segments}. For each segment, we can choose the directions of the edges to be consistent, i.e. they form a directed path. So for all of the edges in this segment, the associated $\eta_{i,j}$ is now the same. Swapping columns does not change the linear independence of the rows, so we may rewrite $K$ as the following block matrix:
\begin{equation}
K = 
\begin{pmatrix}
\eta_{1,1}'A_1 & \ldots & \eta_{1,t}' A_t\\
\vdots & \ddots & \vdots \\
\eta_{g,1}'A_1 & \ldots & \eta_{g,t}'A_t
\end{pmatrix}
\end{equation} 

The columns of $A_j$ are the direction vectors of edges in segment $j$ and $\eta_{i,j}'$ indicates whether the $j$th segment is in cycle $i$ (with the sign determined by the direction).

Superabundancy is equivalent to the existence of $\lambda^{(i)} \in \ZZ^r$, for $i = 1,\ldots,g$, not all zero, such that 
$$\sum_{i=1}^g\sum_{j=1}^r \lambda^{(i)}_j k_{(i-1)r+j, s} = 0$$ 
for all $s$, where $k_{i,j}$ are the entries $K$. In other words, the edges of segment $j$ are contained in the affine subspace 
$$\langle \sum_{i=1}^g \eta_{i,j}' \lambda^{(i)} \rangle^\perp + \mathbf{c}$$ 
For example, we may have that $\lambda^{(1)} \neq 0$ and all other $\lambda^{(i)}$ are 0. Then the first cycle lies inside an affine translation of the $r-1$ dimensional hyperplane $\langle \lambda^{(1)} \rangle^\perp$, so we have \emph{planar superabundance}.

\begin{definition}
    Given a parametrised tropical curve $\varphi: \Gamma \rightarrow \RR^r$ of genus $g$, its \emph{core neighbourhood} is the minimal subcurve $\varphi': \Gamma' \rightarrow \RR^r$ with genus $g$. 
\end{definition}

The core neighbourhood is obtained after deleting any edges that are not in any basis of cycles and vertices that then have valence $0$. From the discussion above, we obtain:

 \begin{lemma}
     A parametrised tropical curve $(\Gamma, \varphi)$ is superabundant if and only if its core neighbrourhood is superabundant. 
 \end{lemma}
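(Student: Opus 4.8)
The plan is to observe that both instances of superabundancy---for $(\Gamma,\varphi)$ and for its core neighbourhood---are read off from the same linear-algebraic data, namely the rank of an abundancy matrix, and that passing to the core neighbourhood alters this matrix only by discarding columns that are identically zero.

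First I would recall from the discussion following the Proposition that, after fixing a basis of $g$ simple cycles, the deformation space of a (sub)curve with $b$ bounded edges and genus $g$ is cut out inside $\RR^b$ by the equations $\mathbf{k}_j\cdot\mathbf{l}=0$, one for each row $\mathbf{k}_j$ of the abundancy matrix $K$. Hence its dimension (already taken modulo translation) equals $b-\rank K$, and the curve is superabundant precisely when $\rank K<rg$, i.e.\ when the rows of $K$ fail to be linearly independent. This reduces the lemma to a statement about ranks.

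Next I would compare the abundancy matrices of $\Gamma$ and of its core neighbourhood $\Gamma'$. By definition $\Gamma'$ is obtained by deleting the edges of $\Gamma$ lying in no basis cycle together with the resulting valence-$0$ vertices; since no edge of a cycle is removed, this operation leaves the first homology unchanged, so $g(\Gamma')=g(\Gamma)=g$ and a fixed basis of simple cycles of $\Gamma$ serves simultaneously as a basis for $\Gamma'$. Because $\varphi'=\varphi|_{\Gamma'}$, the direction vectors $\vec e$ and incidence signs $\eta_{i,j}$ along the retained edges are unchanged. Therefore the abundancy matrix $K'$ of $\Gamma'$ is exactly the matrix obtained from $K$ by deleting the columns indexed by edges lying in no cycle. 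As observed above, these discarded columns are identically zero, so $\rank K'=\rank K$.

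The lemma then follows immediately: $(\Gamma,\varphi)$ is superabundant iff $\rank K<rg$, while $\Gamma'$ is superabundant iff $\rank K'<rg$, and these conditions coincide since $\rank K=\rank K'$ and the genus $g$ is common to both. There is no serious obstacle here; the only points demanding care are that deleting the non-cycle edges genuinely preserves both the genus and the cycle basis (so that $K$ and $K'$ have literally the same rows), and that the deformation-space dimension of the subcurve really is $b'-\rank K'$ computed with the same $g$---both of which are immediate from the definitions of subcurve and of core neighbourhood.
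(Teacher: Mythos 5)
Your proof is correct and follows essentially the same route as the paper: the paper deduces this lemma directly from its preceding discussion, where edges lying in no basis cycle are observed to give identically zero columns of the abundancy matrix $K$, so that discarding them (which is exactly passing to the core neighbourhood) changes neither the genus, the cycle basis, nor $\rank K$, and hence not the superabundancy condition $\rank K < rg$. You merely make explicit the rank bookkeeping that the paper leaves implicit, which is a faithful rendering of its argument.
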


\subsection{Rational Transformations} The group $GL_n(\ZZ)$ naturally acts by post-composition on the set of parameterized tropical curves in $\RR^r$. More generally, we consider \emph{rational} transformations. Given a matrix $Q$ in $GL_n(\QQ)$ and a tropical curve $(\Gamma,\varphi)$, the composite map
\[
\Gamma\to \RR^r\xrightarrow{Q} \RR^r
\]
need not be a parameterized tropical curve, since it may not have integral slopes. However, by changing the integral structure on $\Gamma$, it can be made into one. 

Given $\Gamma$ and an edge $e$ in it, a \textit{rational dilation of $e$} changes the metric on $e$ by multiplying the length by a positive rational number. A rational dilation of a leg of $\Gamma$ is defined analogously. A \textit{rational dilation of $\Gamma$} is a sequence of rational dilations of edges and legs. It produces a homeomorphic graph $\Gamma'$, with a different integral/metric structure. 

\begin{proposition}
	Consider the linear transformation $T:\RR^r \rightarrow \RR^s$, given by $\mathbf{x} \mapsto Q\mathbf{x}+ \mathbf{b}$ where $Q \in M_{s,r}(\QQ), \mathbf{b} \in \RR^s$. For a parameterized tropical curve $(\Gamma,\varphi)$, let $\varphi^* = T \circ \varphi$. Then there exists a parametrized tropical curve $(\Gamma',\varphi')$ such that $\Gamma'$ is a rational dilation of $\Gamma$ and $\varphi'$ is obtained by composing with the resulting homeomorphism. Moreover, if $ Q \in GL_r(\QQ)$, the deformation space of $(\Gamma',\varphi')$  coincides with that of $(\Gamma,\varphi)$ as a real cone, i.e. ignoring integral structures.
\end{proposition}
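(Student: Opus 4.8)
The plan is to handle existence first and then the ``moreover'' claim about deformation cones separately. For existence, I would begin by recording that $\varphi^{*} = T\circ\varphi$ is continuous and piecewise linear, and that on each bounded edge $e$ with integral directional derivative $\mathbf{w}_e$ the composite $\varphi^{*}$ has directional derivative $Q\mathbf{w}_e \in \QQ^{s}$. Since $T$ differs from its linear part $Q$ only by the translation $\mathbf{b}$, and translations do not affect directional derivatives, the balancing identity for $\varphi^{*}$ reads $\sum_{e\ni v} Q\mathbf{w}_{e,v} = Q\big(\sum_{e \ni v}\mathbf{w}_{e,v}\big) = 0$ at every vertex $v$. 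Thus $\varphi^{*}$ is already balanced, but only with rational slopes.

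Next I would clear denominators. Because $\Gamma$ has finitely many edges and legs, I can choose a single positive integer $N$ with $N\,Q\mathbf{w}_e \in \ZZ^{s}$ for every edge and leg $e$. I would then take $\Gamma'$ to be the \emph{uniform} rational dilation multiplying every edge and leg length by $1/N$, let $h\colon\Gamma\to\Gamma'$ be the resulting homeomorphism, and set $\varphi' = \varphi^{*}\circ h^{-1}$. Rescaling arc length on each edge by $1/N$ multiplies every directional derivative by $N$, so $\varphi'$ has integral slopes $N\,Q\mathbf{w}_e$. The one subtle point, and the step I expect to be the main obstacle, is preserving balancing: since the \emph{same} factor $N$ scales every slope, $\sum_{e\ni v} N\,Q\mathbf{w}_{e,v} = N\sum_{e\ni v}Q\mathbf{w}_{e,v} = 0$, so balancing survives. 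This is exactly why I insist on a uniform dilation — clearing denominators edge by edge would scale the slopes meeting at a vertex by different factors and destroy balancing (indeed, for a trivalent vertex with independent incident directions one checks the dilation factors are forced to agree). This produces the parametrized tropical curve $(\Gamma',\varphi')$ for arbitrary rational $Q$.

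For the ``moreover'' part I would compare abundancy matrices directly. Fix a basis of cycles and the incidence signs $\eta_{j,i}$; these are combinatorial and unchanged by $h$, so the abundancy matrix of $(\Gamma',\varphi')$ is obtained from that of $(\Gamma,\varphi)$ by replacing each block $\eta_{j,i}\mathbf{w}_{e_i}$ with $\eta_{j,i}\,N Q\mathbf{w}_{e_i}$, i.e. $K' = (I_g\otimes NQ)\,K$ in block form. A length assignment $\mathbf{l}$ lies in $\ker K'$ iff $NQ\big(\sum_i \eta_{j,i}\mathbf{w}_{e_i}\ell_i\big)=0$ for every cycle $j$, and since $Q\in GL_r(\QQ)$ makes $NQ$ invertible, this is equivalent to $\sum_i \eta_{j,i}\mathbf{w}_{e_i}\ell_i = 0$, i.e. to $\mathbf{l}\in\ker K$. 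Hence $\ker K' = \ker K$ under the edge-indexed identification $\RR^{E(\Gamma)}\cong\RR^{E(\Gamma')}$ induced by $h$, so the two deformation cones cut out the same subset of $\RR^{b}$. As the dilation is the uniform scaling $\ell\mapsto \ell/N$ and cones are scaling-invariant, the cones coincide exactly as real cones; only the integral structures (which lengths are lattice points) differ, which is the content of the stated caveat.

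I would close by noting where invertibility is essential: if $Q$ is not invertible then $\ker K'\supseteq\ker K$ may be strictly larger, since the cycle sums need only land in $\ker Q$ rather than vanish, so the deformation cone can grow — this is precisely the mechanism by which a projection can create superabundancy. Once the balancing check is dispatched by the uniform choice of $N$, everything else is linear algebra with the abundancy matrix.
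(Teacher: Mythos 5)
Your proposal is correct and follows essentially the same route as the paper's proof: compose with $T$, clear denominators by an integer scaling of the slopes compensated by dilating edge lengths, verify balancing by factoring the scalar and $Q$ out of the vertex sums, and then compare abundancy-matrix kernels, using invertibility of $Q$ for the ``moreover'' claim. Your explicit insistence on a single uniform $N$ tidies a point the paper leaves slightly implicit (it defines $m$ edge-by-edge but then uses one $m$ in the balancing check), but the argument is the same.
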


\begin{proof}
	Since $\varphi$ is continuous and linear on the edges of $\Gamma$, and $T$ is linear (hence continuous), the map $\varphi^*$ is also a continuous, piecewise linear function. 
	On edge $e$, the map $\varphi^*$ has the form $Q \mathbf{w}x+Q\mathbf{c}+ \mathbf{b}$. For this to be a tropical parametrization, we require $Q \mathbf{w}$ to have integer elements. Let $m$ be the lowest common multiple of the denominators of the entries of $Q \mathbf{w}$. We can multiply $Q\mathbf{w}$ by $m$ and divide the lengths of the edges of $\Gamma$ by $m$ to preserve the image. Let the resulting piecewise linear function and abstract tropical curve be $\varphi'$ and $\Gamma'$. 
	
	It now suffices to check the balancing condition. Consider a vertex $v$ with edges $e_1,\ldots,e_t$ adjacent to it, with outgoing slopes $\mathbf{w}_1,\ldots,\mathbf{w}_t$. The new slopes are $mQ\mathbf{w}_1,\ldots, mQ \mathbf{w}_t$. Have 
 $$mQ\mathbf{w}_1 + \ldots + mQ \mathbf{w}_t  = mQ (\mathbf{w}_1+\ldots+\mathbf{w}_t) = 0$$ 
 since $\mathbf{w}_1+\ldots+\mathbf{w}_t=0$, as required. 
	
	 Note that $\Gamma'$ has the same graph structure as $\Gamma$. Suppose $\ell$ lies in the deformation space for $(\Gamma,\varphi)$. We obtain the superabundancy matrix for $(\Gamma', \varphi')$ by multiplying all slopes by $mQ$:
	$$ K = 
	\begin{pmatrix}
	m\eta_{1,1}Q \mathbf{w}_{e_1} & \ldots & m\eta_{1,b}Q \mathbf{w}_{e_b}\\
	\vdots & \ddots & \vdots \\
	m\eta_{g,1}Q \mathbf{w}_{e_1} & \ldots & m\eta_{g,b}Q \mathbf{w}_{e_b}
	\end{pmatrix}
	$$
	By factorising, we see that 
 $$\ell_1 m\eta_{i,1} Q\mathbf{w}_{e_1} + \ldots + \ell_b m\eta_{i,b} Q\mathbf{w}_{e_b}= mQ (\ell_1 \eta_{i,1} \mathbf{w}_{e_1}\ldots+\ell_b \eta_{i,b}\mathbf{w}_{e_b})=0$$
 for $i=1,\ldots,g$, and so $\ell \in \mathcal{M}_{(\Gamma', \varphi')}$, the kernel of the associated abundancy map. 
	Hence $\mathcal{M}_{(\Gamma,\varphi)} \subset \mathcal{M}_{(\Gamma',\varphi')}$. If $Q \in GL_r(\QQ)$ then the process is reversible; the kernels of the two matrices coincide, so we ensure that the two deformation spaces coincide.
\end{proof}

To understand genus $g$ superabundancy, it suffices to understand it in $\RR^d$ where $d \leq g$.

\begin{proposition} \label{prop1}
    Given any superabundant genus $g$ parametrised tropical curve in $\RR^d$ for $d>g$, we can find a projection to $\RR^g$ whose image (perhaps after applying rational dilation) is superabundant. 
\end{proposition}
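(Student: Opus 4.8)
The plan is to translate superabundancy of the projected curve into a statement about the left null space of the abundancy matrix $K$, and then to exploit the inequality $d>g$ to find a projection with enough room to carry the superabundancy data. First I would recall, from the discussion preceding the definition of core neighbourhood, that $(\Gamma,\varphi)$ being superabundant means exactly that the $gd$ rows of the (integer) abundancy matrix $K$ are linearly dependent. Packaging the coefficients of such a dependence cycle-by-cycle, this is the existence of vectors $\lambda^{(1)},\dots,\lambda^{(g)}\in\RR^d$, not all zero, such that for every segment $j$ the direction vectors of the edges in segment $j$ are all orthogonal to $\mu_j:=\sum_{i=1}^g\eta'_{i,j}\lambda^{(i)}$. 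Since $K$ has integer entries, its row dependences have a rational basis, so I may take each $\lambda^{(i)}\in\QQ^d$.

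Next, set $V=\langle\lambda^{(1)},\dots,\lambda^{(g)}\rangle\subseteq\QQ^d$. Because there are only $g$ of them, $\dim V\le g$, and since $d>g$ this leaves room: $\dim V^\perp=d-\dim V\ge d-g$. I choose a rational $(d-g)$-dimensional subspace $N\subseteq V^\perp$ and take a rational surjection $P:\RR^d\to\RR^g$ with $\ker P=N$, so that $\mathrm{im}(P^T)=N^\perp\supseteq V$ and $P^T:\RR^g\to\RR^d$ is injective. As each $\lambda^{(i)}$ lies in $V\subseteq\mathrm{im}(P^T)$, I may define $\tilde\lambda^{(i)}:=(P^T)^{-1}\lambda^{(i)}\in\RR^g$; these are not all zero because $P^T$ is injective and the $\lambda^{(i)}$ are not all zero.

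I then apply the rational-transformation Proposition to the rectangular matrix $P\in M_{g,d}(\QQ)$: after a rational dilation, $P\circ\varphi$ is a genuine parametrized tropical curve $(\Gamma',\varphi')$ on the same underlying graph (hence of genus $g$), whose abundancy matrix $K'$ is obtained from $K$ by replacing each direction vector $\vec e$ by $P\vec e$. To prove the image superabundant it suffices to exhibit a dependence among the rows of $K'$, and the $\tilde\lambda^{(i)}$ do this: for every segment $j$ and every edge $\vec e$ in it,
\[
\Big\langle P\vec e,\ \sum_i\eta'_{i,j}\tilde\lambda^{(i)}\Big\rangle=\Big\langle \vec e,\ P^T\!\sum_i\eta'_{i,j}\tilde\lambda^{(i)}\Big\rangle=\Big\langle \vec e,\ \sum_i\eta'_{i,j}\lambda^{(i)}\Big\rangle=\langle \vec e,\mu_j\rangle=0,
\]
using $P^T\tilde\lambda^{(i)}=\lambda^{(i)}$ and the original orthogonality. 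Thus the rows of $K'$ are dependent and $(\Gamma',\varphi')$ is superabundant.

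The work here is bookkeeping rather than a deep obstacle. The two points I would be careful to state are: that the dependence can be chosen rational, so $P$ is a legitimate rational transformation and the earlier Proposition applies; and that projecting to $\RR^g$ does not silently drop the genus. The latter is automatic because $P\circ\varphi$ lives on the same abstract graph $\Gamma$, so even if $P$ collapses some edge directions to zero this only inserts zero columns into $K'$, changing neither the first Betti number nor the row-dependence witnessed by the $\tilde\lambda^{(i)}$. The conceptual crux, and the only place $d>g$ is used, is the inequality $\dim V\le g<d$: it is precisely this that guarantees $V^\perp$ is large enough to contain a $(d-g)$-dimensional kernel, i.e. that there exists a projection to $\RR^g$ whose transpose still recovers the full superabundancy data.
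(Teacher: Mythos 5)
Your proof is correct and takes essentially the same route as the paper: the paper's argument also ``projects onto the obstruction,'' using an orthogonal projection onto the span of the $\lambda^{(i)}$ and observing that the affine hyperplanes containing the segments persist in the image. Your write-up is in fact more careful than the paper's on two points it leaves implicit --- that the projection can be chosen rational (so the rational-transformation proposition applies) and that one still lands in $\RR^g$ when $\dim\langle\lambda^{(1)},\dots,\lambda^{(g)}\rangle<g$.
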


\begin{proof}
    The idea is to ``project onto the obstruction''. Suppose we have a parametrised tropical curve $(\Gamma, \varphi)$ of genus $g$ which is superabundant. Superabundancy is equivalent to the existence of $\lambda^{(1)}$, \ldots, $\lambda^{(g)} \in \ZZ^r$ with segments $i = 1, \ldots, t$ contained in 
    $$\langle \sum_{i=1}^g \eta_{i,j}' \lambda^{(i)} \rangle^\perp + \mathbf{c}$$ 
    
    We can consider an orthogonal projection $\pi$ on to the span of $\lambda^{(i)}$, which is a space with dimension at most $g$. The affine subspaces 
    
    $$\langle \sum_{i=1}^g \eta_{i,j}' \lambda^{(i)} \rangle^\perp + \mathbf{c}$$ 
    are mapped to 
    $$\langle \sum_{i=1}^g \eta_{i,j}' \lambda^{(i)} \rangle^\perp + \pi(\mathbf{c})$$ 
    So the resulting curves satisfy the conditions for superabundancy. 
\end{proof}

\subsection{Defining new phenomena}

We clarify what we mean by ``new'' types of superabundancy. There are two ways in which superabundant curves may easily be constructed from other superabundant curves. Firstly, if a genus $g$ curve contains a genus $g' < g$ subcurve which is superabundant, then it is also superabundant. Secondly, given a superabundant curve in $\RR^r$ with segments contained in affine hyperplanes with normals $\lambda^{(i)}$ as in the discussion above, we can consider the product $\RR^r \times \RR^s$, and extend the affine hyperplanes by taking the preimage of the projection. Any curve in $\RR^{r+s}$ with segments contained in these extended affine hyperplanes are also superabundant. In this case, we have a projection onto a superabundant curve in $\RR^r$. 

\begin{definition}\label{defirred}
	A superabundant tropical curve $(\Gamma, \varphi)$ has \emph{irreducible superabundacy} if it does not contain a subcurve of lower genus that is superabundant. Otherwise, say $(\Gamma, \varphi)$ has \emph{reducible superabundancy}.
\end{definition}

\begin{definition}\label{defindecomp}
        A superabundant tropical curve $(\Gamma, \varphi)$ for $\varphi: \Gamma \rightarrow \RR^r$ has \emph{decomposable superabundancy} if there exists a linear map $A: \RR^r \rightarrow \RR^{r'}$ with $r' < r$, such that a rational dilation of $(\Gamma, A\circ \varphi)$ is superabundant. Otherwise, say it has \emph{indecomposable superabundancy}.
\end{definition}


Superabundancy is determined by the basis of cycles, and so a parametrized tropical curve has irreducible or indecomposable superabundancy if and only if its core neighbourhood does. 

\begin{remark}
    These two notions are not equivalent, nor does one imply the other. For example, we can consider a modification of the triple tuning fork in $\RR^2$ (Figure \ref{fig:tuning-fork}), which is a triple tuning fork in $\RR^3$, were we have ``pulled apart'' each fork (see Figure \ref{fig:tuning-fork-mod}). No cycle is contained in a plane so it has irreducible superabundancy, but the projection to $\RR^2$ is the triple tuning fork in $\RR^2$, so is still superabundant. Thus it does not have indecomposable superabundancy.

    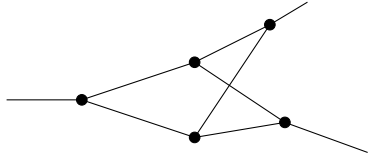
\begin{figure}[h]
        \centering

        \begin{tikzpicture}
        \filldraw [black] (0,0) circle (2pt);
        \filldraw [black] (0,-1) circle (2pt);
        \filldraw [black] (-1.5,-0.5) circle (2pt);
        \filldraw [black] (1,0.5) circle (2pt);
        \filldraw [black] (1.2,-0.8) circle (2pt);
        \draw (0,0) -- (1,0.5);
        \draw (0,0) -- (1.2,-0.8);
        \draw (0,0) -- (-1.5, -0.5);
        \draw (0,-1) -- (1,0.5);
        \draw (0,-1) -- (1.2,-0.8);
        \draw (0,-1) -- (-1.5, -0.5);
        \draw (1,0.5) -- (1.5, 0.8);
        \draw (1.2, -0.8) -- (2.3, -1.2);
        \draw (-1.5, -0.5) -- (-2.5, -0.5);
        \end{tikzpicture}

        \caption{The triple tuning fork in $\RR^3$ has irreducible superabundancy but not indecomposable superabundancy.}
        \label{fig:tuning-fork-mod}
    \end{figure}
    
    Conversely, if a tropical curve in $\RR^2$ contains a triple tuning fork in $\RR^2$ and a full-rank cycle (see Figure \ref{fig:trop-curve-2}), then this has indecomposable superabundancy but not irreducible superabundancy. 

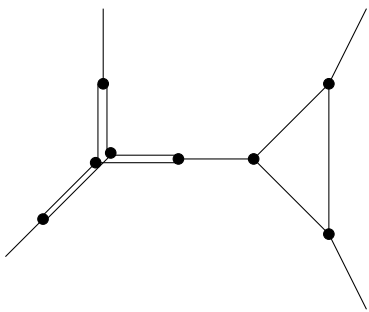
\begin{figure}[H]
    \centering
    \begin{tikzpicture}
        \filldraw [black] (-0.1, -0.05) circle (2pt);
        \filldraw [black] (0.1, 0.08) circle (2pt);
        \filldraw [black] (1, 0) circle (2pt); 
        \filldraw [black] (0, 1) circle (2pt); 
        \filldraw [black] (-0.8, -0.8) circle (2pt);
        \draw (-0.05,-0.05) -- (1, -0.05);
        \draw (-0.07,-0.05) -- (-0.07, 1);
        \draw (-0.1,-0.05) -- (-0.8,-0.75);
        \draw (0.1, 0.05) -- (1, 0.05);
        \draw (0.05, 0.05) -- (0.05, 1);
        \draw (0.1, 0.05) -- (-0.75, -0.8);
        \draw (-0.8, -0.8) -- (-1.3,-1.3);
        \draw (1, 0) -- (2, 0);
        \draw (0, 1) -- (0, 2);
        \filldraw[black] (2, 0) circle (2pt);
        \filldraw[black] (3, 1) circle (2pt);
        \filldraw[black] (3, -1) circle (2pt);
        \draw (2, 0) -- (3, 1);
        \draw (2, 0) -- (3, -1);
        \draw (3, 1) -- (3, -1);
        \draw (3, 1) -- (3.5, 2);
        \draw (3, -1) -- (3.5, -2);
        
    \end{tikzpicture}
    \caption{A tropical curve in $\RR^2$ with indecomposable, but not irreducible superabundancy.}
    \label{fig:trop-curve-2}
\end{figure}

    However, we can obtain the notion of a ``minimal'' superabundant curve. 
    
\end{remark}

\begin{proposition}
    If $\varphi: \Gamma \rightarrow \RR^r$ has irreducible superabundancy, then any projection that is superabundant also has irreducible superabundancy. 
\end{proposition}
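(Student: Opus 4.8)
The plan is to reduce the statement to a rank comparison between abundancy matrices before and after projection, using the fact that a projection changes only the map to Euclidean space, never the abstract curve — so the underlying subgraphs and their genera are untouched. Write the projection as $\psi = T\circ\varphi$ with $T(\mathbf{x}) = Q\mathbf{x}$ for a surjective linear map $Q:\RR^r\to\RR^{r'}$ (so $\dim\ker Q = r-r'$), followed by the rational dilation that turns $\psi$ into an honest parametrised tropical curve. Since a rational dilation only rescales each edge's slope by a positive scalar, it rescales the columns of any abundancy matrix by positive scalars and hence leaves its row rank — and therefore the superabundancy of every subcurve — unchanged. This lets me compute all ranks directly from the projected direction vectors $Q\mathbf{w}_e$.

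First I would record the rank form of superabundancy. For a subcurve $\Gamma''\subseteq\Gamma$ of genus $h$ with $b''$ bounded edges, its deformation space has dimension $b'' - \rank K_{\Gamma''}$, where $K_{\Gamma''}$ has $rh$ rows (one block of $r$ rows per basis cycle). Thus $\Gamma''$ is superabundant in $\RR^r$ precisely when its \emph{excess} $e_\varphi(\Gamma'') := rh - \rank K_{\Gamma''}$ is positive, and superabundant after projection precisely when $e_\psi(\Gamma'') := r'h - \rank K'_{\Gamma''}$ is positive, where $K'_{\Gamma''}$ is the projected matrix.

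The crux is the inequality $e_\varphi(\Gamma'') \ge e_\psi(\Gamma'')$ for every subcurve, i.e.\ that projection never decreases the excess. This follows by observing that the projected abundancy matrix factors as $K'_{\Gamma''} = \widetilde Q\,K_{\Gamma''}$, where $\widetilde Q = \diag(Q,\dots,Q)$ ($h$ copies) applies $Q$ within each cycle block: column $j$, whose $i$-th block is $\eta'_{i,j}\mathbf{w}_{e_j}$, is sent to the column with $i$-th block $\eta'_{i,j}Q\mathbf{w}_{e_j}$. Hence the column space of $K'_{\Gamma''}$ is $\widetilde Q$ applied to that of $K_{\Gamma''}$, so
$$\rank K_{\Gamma''} - \rank K'_{\Gamma''} = \dim\big(\operatorname{colsp}K_{\Gamma''}\cap\ker\widetilde Q\big) \le \dim\ker\widetilde Q = h\,(r-r').$$
Subtracting from $rh$ and using $rh - h(r-r') = r'h$ gives $e_\varphi(\Gamma'')\ge e_\psi(\Gamma'')$.

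With the inequality in hand the result is immediate by contraposition. Suppose the projection $\psi$ failed to have irreducible superabundancy; then by Definition~\ref{defirred} it contains a subcurve $\Gamma''$ of genus $h<g$ that is superabundant in $\RR^{r'}$, i.e.\ $e_\psi(\Gamma'')>0$. Because the projection leaves the underlying subgraph and its genus unchanged, $\Gamma''$ is also a subcurve of $(\Gamma,\varphi)$ of the same genus $h<g$ (Definition~\ref{def:subcurve}), and $e_\varphi(\Gamma'')\ge e_\psi(\Gamma'')>0$, so $\Gamma''$ is superabundant for $\varphi$ too — contradicting the irreducible superabundancy of $\varphi$. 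I expect the only delicate point to be the bookkeeping around the rational dilation, namely verifying that passing to a genuine tropical curve after projection preserves every relevant rank (and hence every excess); the rank inequality above is the conceptual heart and should go through cleanly.
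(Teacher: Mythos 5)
Your proof is correct, and it shares the paper's logical skeleton: argue by contraposition, take a superabundant subcurve $\Gamma''$ of strictly lower genus in the projection, and show that the same subcurve is already superabundant for $\varphi$, contradicting irreducibility. Where you differ is the mechanism of the transfer step. The paper transfers the \emph{witness}: superabundancy of the projected subcurve supplies hyperplane normals $\lambda^{(i)}$ (equivalently, a nonzero vector in the left kernel of the projected abundancy matrix), which it pulls back along the inclusion $\RR^{r'}\hookrightarrow\RR^r$ dual to the projection; injectivity of that dual map — i.e.\ surjectivity of the projection — keeps the witness nonzero. You instead prove the quantitative inequality $e_\varphi(\Gamma'')\ge e_\psi(\Gamma'')$ from the factorization $K'_{\Gamma''}=\widetilde Q\,K_{\Gamma''}$ by a column-space count against $\ker\widetilde Q$. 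These are dual linear-algebra readings of the same factorization: your excess inequality says exactly that the left kernel of $K_{\Gamma''}$ has dimension at least that of $K'_{\Gamma''}$, which the paper realizes through an explicit injection. What your route buys is a uniform quantitative statement (the excess of \emph{every} subcurve can only drop under projection) and an explicit check, absent from the paper, that the rational dilation is harmless because it only rescales columns and so preserves every rank. Two minor flags: your prose "projection never decreases the excess" states the inequality backwards — what you display and then use, correctly, is that the excess never \emph{increases} when passing to the projection; and your surjectivity hypothesis on $Q$ is genuinely needed and matches the paper's implicit assumption (it projects onto the first $r'$ coordinates) — for a non-surjective linear map the image lies in a proper subspace of $\RR^{r'}$, so every cycle of the image is planar superabundant and the statement would fail, meaning "projection" must indeed be read as a surjection.
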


\begin{proof}
    Suppose that for a projection $\pi$, the resulting curve $\varphi': \Gamma \rightarrow \RR^{r'}$ is superabundant of reducible type. Then there is a subcurve of strictly lower genus whose segments are contained in affine hyperplanes with configurations giving rise to the superabundancy, say with normals $\lambda^{(i)}$. The preimage of these segments in $\varphi: \Gamma \rightarrow \RR^r$ must be contained in affine hyperplanes with normals $\tilde{\lambda}^{(i)}$, where $\tilde{\lambda}^{(i)}$ are obtained from $\lambda^{(i)}$ under the inclusion $\RR^{r'} \rightarrow \RR^r$ into the first $r'$ coordinates. But the $\tilde{\lambda}^{(i)}$ satisfy the same relations that the $\lambda^{(i)}$ do, and hence give rise to superabundancy in a subcurve with strictly lower genus. 
\end{proof}

\begin{remark}
    In practice, irreducible superabundancy can be checked by removing edges one at a time and checking that the resulting subcurve is not superabundant. Note that a superabundant curve \emph{must} have all of its segments contained in affine hyperplanes. Removing an edge may cause this to fail, in which case the resulting subcurve cannot be superabundant. An example is shown in Figure \ref{fig:remove-edge}. 

    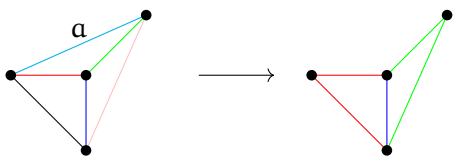
\begin{figure}[h]
        \centering
        \begin{tikzpicture}
            \coordinate (1) at (0,0);
            \coordinate (2) at (-1, 0);
            \coordinate (3) at (0, -1);
            \coordinate (4) at (0.8, 0.8);
            \coordinate (5) at (1.5, 0);
            \coordinate (6) at (2.5, 0);
            \coordinate (7) at (4,0);
            \coordinate (8) at (3, 0);
            \coordinate (9) at (4, -1);
            \coordinate (10) at (4.8, 0.8);
    
            \draw[red] (1) -- (2); 
            \draw[blue] (1) -- (3); 
            \draw[green] (1) -- (4);
            \draw[black] (2) -- (3);
            \draw[pink] (3) -- (4);
            \draw[cyan] (4) -- (2);
            \draw[red] (7) -- (8); 
            \draw[blue] (7) -- (9); 
            \draw [red] (8) -- (9);
            \draw [green] (7) -- (10);
            \draw [green] (9) -- (10);
            \node at (-0.1,0.6) {$a$};

            \draw[->] (5) -- (6);
    
    \foreach \x in {(1), (2), (3), (4), (7), (8), (9), (10)}{
        \fill \x circle[radius=2pt];
        }

        \end{tikzpicture}
        \caption{Removing edge $a$ forces segments to span $\RR^2$, so the resulting subcurve is not superabundant.}
        \label{fig:remove-edge}
    \end{figure}

    Similarly, any projection that causes a segment to not be contained in an affine hyperplane is immediately ruled out when checking for indecomposable superabundancy.
\end{remark}

\section{Superabundant curves of genus $2$ and $3$}
\label{section2}

\subsection{Genus $2$}

We prove Theorem \ref{thmB}. The strategy is to consider all possible smoothed graphs and construct affine hyperplanes where segments should lie. First note that the only superabundancy in genus $1$ is  planar superabundancy. We now look for irreducible genus $2$ superabundancy. 

\begin{remark}
    If $(\Gamma, \varphi)$, a parametrized tropical curve of genus $g$, has corresponding smoothed graph with vertex connectivity less than or equal to 1, then the two cycles are independent of one another. They each reduce to a lower genus case. Hence, if $(\Gamma, \varphi)$ has irreducible superabundancy, then the corresponding smoothed curve has vertex connectivity at least $2$. 
\end{remark}

\begin{theorem}
    A genus $2$ parametrized tropical curve has irreducible superabundancy if and only if its segments are contained (up to translation) in hyperplanes $H_1, H_2, H_3$, such that $H_1, H_2, H_3$ have normals $\lambda^{(1)}, \lambda^{(2)}$, $\lambda^{(1)}+ \lambda^{(2)}$ respectively, with $\lambda^{(1)}, \lambda^{(2)}$ linearly independent. 
\end{theorem}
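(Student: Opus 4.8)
The plan is to detect everything on the core neighbourhood and then analyse the single relevant smoothed graph. Since superabundancy, irreducible superabundancy and indecomposable superabundancy are all read off from the core (the lemma and the discussion of Section~\ref{sec:abundancy}), I may assume $(\Gamma,\varphi)$ equals its core neighbourhood, a smoothed connected graph of genus $2$. By the preceding remark, irreducible superabundancy forces this smoothed graph to have vertex connectivity at least $2$, since a cut vertex splits the two cycles into independent pieces of lower genus. A short Euler-characteristic count (minimal vertex set means no $2$-valent vertices, so $|E|=|V|+1$ with all degrees $\geq 3$ gives $|V|\leq 2$) shows the only connected genus $2$ smoothed graph with no cut vertex is the theta graph $\Theta$: two vertices $v_1,v_2$ joined by three segments $s_1,s_2,s_3$. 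The competing minimal genus $2$ graphs, the figure-eight and the dumbbell, both have a cut vertex, so they are excluded.

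Next I apply the superabundancy criterion of Section~\ref{sec:abundancy} to $\Theta$. Choosing the cycle basis $\gamma_1=s_1-s_2$, $\gamma_2=s_2-s_3$, the coefficients $\eta'_{i,j}$ are read off at once, and the criterion says $(\Gamma,\varphi)$ is superabundant precisely when there is a nonzero pair $(\lambda^{(1)},\lambda^{(2)})$ for which each segment $s_j$ lies in an affine hyperplane with normal $\sum_i \eta'_{i,j}\lambda^{(i)}$. These three normals come out as $\lambda^{(1)}$, $\lambda^{(2)}-\lambda^{(1)}$ and $-\lambda^{(2)}$. Putting $\mu^{(1)}=\lambda^{(1)}$ and $\mu^{(2)}=-\lambda^{(2)}$ turns them into $\mu^{(1)}$, $-(\mu^{(1)}+\mu^{(2)})$ and $\mu^{(2)}$, i.e.\ (up to sign, which is irrelevant for a hyperplane normal) exactly $\mu^{(1)},\mu^{(2)},\mu^{(1)}+\mu^{(2)}$. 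This already produces the asserted normal structure, and reading the $\mu^{(i)}$ back as a choice of $(\lambda^{(1)},\lambda^{(2)})$ gives the easy converse half of the statement, that any such curve is superabundant. It remains to match irreducibility with the linear independence of $\mu^{(1)},\mu^{(2)}$.

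For the direction ``irreducible $\Rightarrow$ independent'' I argue the contrapositive. If $\mu^{(1)},\mu^{(2)}$ are dependent while $(\lambda^{(1)},\lambda^{(2)})\neq 0$, then all three normals are proportional to a single nonzero $\nu$, and a brief case check (ruling out two vanishing normals, which would force the data to be zero) shows some cycle $\gamma=s_i\cup s_j$ has both of its segments constrained by $\nu$. Each of $s_i,s_j$ then lies in a hyperplane with normal $\nu$; these two hyperplanes are parallel and both contain the shared vertices $v_1,v_2$, hence coincide, so $\gamma$ is contained in a single affine hyperplane. That is a planar, and therefore superabundant, genus $1$ subcurve, so the superabundancy is reducible. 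Hence irreducibility forces $\mu^{(1)},\mu^{(2)}$ independent.

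For the converse, linear independence must rule out any planar cycle. If a cycle $s_i\cup s_j$ lay in a hyperplane with normal $\nu\neq 0$, then comparing $s_i\perp\nu$ with the assigned constraint $s_i\perp\mu_i$, and likewise for $s_j$, forces $\nu$ proportional to both assigned normals, whence $\mu^{(1)},\mu^{(2)}$ are dependent --- a contradiction. The hard part is precisely this last step: the comparison only yields proportionality once one knows each constrained segment actually spans the hyperplane transverse to its assigned normal, i.e.\ a non-degeneracy that the segments genuinely determine their normals. In $\RR^2$ this is free, since a constrained segment is an interval spanning a line; in general I would reduce to this case by projecting to $\RR^g=\RR^2$ via Proposition~\ref{prop1} and invoking that such a projection preserves irreducible superabundancy. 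I expect the graph classification and this non-degeneracy point to be the two places needing the most care.
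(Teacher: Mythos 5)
Your proof follows essentially the same route as the paper's: reduce to the core neighbourhood, use the cut-vertex remark to force the theta graph, read the three segment normals off the abundancy criterion, and identify linear dependence of the normals with the existence of a planar cycle, i.e.\ a genus~$1$ superabundant subcurve witnessing reducibility. In places you are more careful than the paper: the observation that two parallel hyperplanes both containing the images of the two vertices must coincide is exactly what the paper's ``all the cycles are contained in a hyperplane'' glosses over, and you treat the converse direction explicitly where the paper only says it is clear.

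The one step that does not work is your proposed reduction of the converse (independent normals $\Rightarrow$ irreducible) to $\RR^2$ via Proposition~\ref{prop1}. The propositions in that section transfer irreducibility \emph{from} a curve \emph{to} its superabundant projections; your argument would need the reverse implication, that irreducibility of a projection implies irreducibility upstairs, and that implication is false. In fact the converse half of the statement, read literally in $\RR^r$ with $r\geq 3$, fails: map the theta graph to $\RR^3$ with both vertices sent to the origin and the three segments going out and back along the lines spanned by $e_2$, $e_1$ and $e_1-e_2$ respectively. The segments then lie in hyperplanes with normals $e_1$, $e_2$ and $e_1+e_2$, which satisfy the linear independence hypothesis, yet the whole curve lies in the plane $\{z=0\}$, so every cycle is planar and the superabundancy is reducible. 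Thus no projection trick can rescue the higher-dimensional statement; the theorem has to be read, as the paper implicitly does (it occurs inside the proof of Theorem~\ref{thmB}), for curves in $\RR^2$, where your argument --- a nonzero edge direction in the plane determines its normal line up to scale --- is complete as it stands. Your $\RR^2$ argument is the whole proof; the final paragraph's attempted generalization should be dropped, or replaced by the extra hypothesis that each segment affinely spans the hyperplane it is contained in.
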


\begin{proof} 

By the proceeding remark, the only possible smoothed curve is the $3$-regular graph on two vertices. Label these segments $1$, $2$, and $3$. A choice of the corresponding abundancy matrix is of the form
\begin{equation*}
	K = \begin{pmatrix}
		A_1 & 0 & A_3 \\
		0 & A_2 & A_3 \\
	\end{pmatrix}
\end{equation*}

Have $\lambda^{(1)}$ and $\lambda^{(2)}$ as before. Assuming we have irreducible superabundancy, they are non-zero. Segments $1, 2$ and $3$ are contained in hyperplanes with normal vectors $\lambda^{(1)}$,  $\lambda^{(2)}$, and $\lambda^{(1)} + \lambda^{(2)}$ respectively. 

If $\lambda^{(1)}$ and $\lambda^{(2)}$ are linearly dependent, we would again have planar superabundance. In fact, all the cycles are contained in a hyperplane, and this is reducible. The vertices where the segments meet must be contained in the intersection of the planes containing the segments. Hence these two vertices must be at the same point in $\RR^2$. Translating this point to the origin gives the result. 
\end{proof}

Apply a rational transformation as above which sends the planes with normals $\lambda^{(1)}$ and $\lambda^{(2)}$ to planes in $\RR^2$ with normals $\mathbf{e}_1,\mathbf{e}_2$ as in the statement of Theorem \ref{thmB}. This maps the superabundant curve with its trees removed onto a tropical line with linear parts extended, and segments contained in the lines $\{x=0\}, \{y=0\}$, and $\{x=y\}$.

The converse is clear, since the superabundancy matrix fails to have maximal rank, and Theorem \ref{thmB} follows. 

\subsection{Genus $3$}
\label{sec: genus-3}

We start with our main example. 

\begin{example} \label{Exa:Genus3}
    We constuct a subcurve $\varphi_3': \Gamma_3' \rightarrow \RR^3$ such that any tropical curve $\varphi: \Gamma \rightarrow \RR^3$ which contains $\varphi_3'$ as a core neighbourhood has irreducible and indecomposable superabundancy. The underlying smoothed graph is $K_4$. It is contained in a \emph{tropical plane}, which is the fan with the following top-dimensional cones: \begin{enumerate}[(i)]
        \item the positive span of $\{e_i, e_j \}$, where $e_i$ are standard basis vectors in $\RR^3$;
        \item the positive span of $\{u, e_i\}$, where $u = -(e_1 + e_2+ e_3)$.  
    \end{enumerate} 
    
    A picture is given in Figure \ref{fig:phi3} and the positions of the labelled vertices in Table \ref{tab:phi3_pos}.

    \newpage
    
    \begin{figure}[h]
        \centering
        \begin{tikzpicture}
            \coordinate (1) at (0, 0); 
            \coordinate (2) at (0, 2.5);
            \coordinate (3) at (2, 0.6);
            \coordinate (4) at (2.5, -1);
            \coordinate (5) at (-2, -1.3);

            \filldraw[color=lightgreen] (1) -- (2) -- (3) -- cycle;
            \filldraw[color=lightcyan] (1) -- (3) -- (4) -- cycle;
            \filldraw[color=lightblue] (1) -- (4) -- (2) -- cycle;
            \filldraw[color=lightred] (1) -- (5)-- (2) -- cycle;
            \filldraw[color=lightpink] (1) -- (5) -- (4) -- cycle;
            
            \draw[->] (1) -- (2) node[at end, above] {$e_1$};
            \draw[->, dashed] (1) -- (3) node[at end, above right] {$e_3$};
            \draw[->] (1) -- (4) node[at end, below right] {$e_2$};
            \draw[->] (1) -- (5)  node[at end, below left] {$u$};

            \coordinate (6) at (0, 1); 
            \coordinate (7) at (1.25, 0.6);
            \coordinate (8) at (1.25, -0.5); 
            \coordinate (9) at (0.3, -1);
            \coordinate (10) at (-0.9, -0.6);
            \coordinate (11) at (-0.9, 0.6);
            \coordinate (12) at (1, 1.3);
            \coordinate (13) at (1, 0.3); 
            \coordinate (14) at (2.1, -0.15);

            \draw[color=purple] (6) -- (7) -- (8) -- (9) -- (10)-- (11) -- (6);
            \draw[color=purple, dashed] (6) -- (12) -- (13) -- (14) -- (8);

   \foreach \x in {(6), (7), (8), (9), (10), (11), (12), (13), (14)}{
        \fill \x circle[radius=2pt];
        }

            \coordinate (a) at (6, 0+0.5);
            \coordinate (b) at (6, 1+0.5);
            \coordinate (c) at (6, 2+0.5);
            \coordinate (d) at (6+0.87, -0.5+0.5);
            \coordinate (e) at (6+1.73, -1+0.5);
            \coordinate (g) at (6-0.87, -0.5+0.5);
            \coordinate (h) at (6-1.73, -1+0.5);
            \coordinate (i) at (6+1.73, 1+0.5);
            \coordinate (k) at (6-1.73, 1+0.5);
            \coordinate (j) at (6, -1.87+ 0.5);

            \draw (a) -- (b) -- (c);
            \draw (a) -- (e) -- (d);
            \draw (a) -- (g) -- (h);
            \draw (c) -- (i) -- (e) -- (j) -- (h) -- (k) -- (c);

            \node[above right] at (a) {$a$};
            \node[right] at (b) {$b$};
            \node[above] at (c) {$c$};
            \node[above right] at (d) {$d$};
            \node[below right] at (e) {$e$};
            \node[below] at (g) {$f$};
            \node[below left] at (h) {$g$};
            \node[above right] at (i) {$h$};
            \node[below] at (j) {$i$};
            \node[above left] at (k) {$j$};

               \foreach \x in {(a), (b), (c), (d), (e), (g), (h), (i), (j), (k)}{
        \fill \x circle[radius=2pt];
        }
            
        \end{tikzpicture}
        \caption{$\varphi_3': \Gamma_3' \rightarrow \RR^3$ contained in the tropical plane. Two edges in the positive span of $u$ and $e_3$ are hidden.}
        \label{fig:phi3}
    \end{figure}
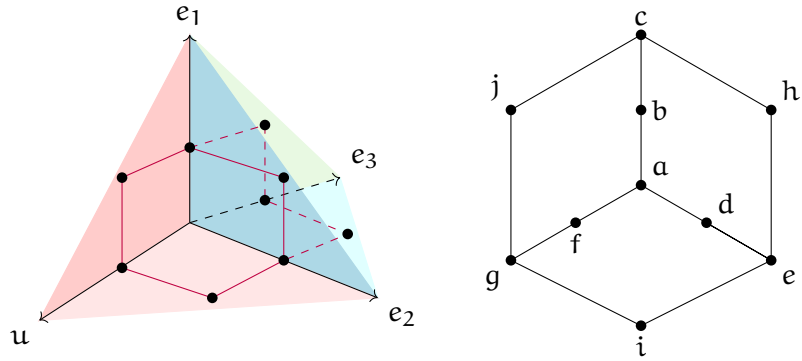

    \begin{table}[h]
        \centering
        \begin{tabular}{c|c}
        \toprule
        Vertex & Position \\
        \midrule
            a &  (-1, -1, -1)\\
            b &  (-1, -1, 1)\\
            c &  (0, 0, 1) \\
            d &  (-1, 1, -1)\\
            e &  (0, 1, 0)\\
            &\\
        \bottomrule
        \end{tabular}
        \begin{tabular}{c|c}
        \toprule
        Vertex & Position \\
        \midrule
            f &  (1, -1, -1)\\
            g &  (1, 0, 0)\\
            h &  (0, 1, 1)\\
            i &  (1, 1, 0)\\
            j &  (1, 0, 1)\\
            &\\
        \bottomrule
        \end{tabular}
        \caption{Positions of the labelled vertices.}
        \label{tab:phi3_pos}
    \end{table}

    By a direct check, the subcurve $(\Gamma_3', \varphi_3')$ has deformation space of dimension $4$ which is $1$ greater than the expected $3$. It has both irreducible and indecomposable superabundancy. 

    The balancing condition is not met at vertices $b, d, f, h, i$, and $j$. We may add legs at these vertices to obtain a tropical curve $(\Gamma_3, \varphi_3)$. 
\end{example}

\begin{remark}
    Intersecting the image of $\varphi_3$ with a tropical plane generically gives $4$ intersections (see Figure \ref{fig:phi3_int}). This mirrors the canonical embedding, which has degree $2g - 2 = 4$. This is not true of all tropical curves with core neighbourhood $\varphi_3'$ since we may add arbitrarily many trees. 

  \begin{figure}[h]
        \centering
        \begin{tikzpicture}
            \coordinate (1) at (0, 0); 
            \coordinate (2) at (0, 5);
            \coordinate (3) at (4, 1.2);
            \coordinate (4) at (5, -2);
            \coordinate (5) at (-4, -2.6);

            \filldraw[color=lightgreen] (1) -- (2) -- (3) -- cycle;
            \filldraw[color=lightcyan] (1) -- (3) -- (4) -- cycle;
            \filldraw[color=lightblue] (1) -- (4) -- (2) -- cycle;
            \filldraw[color=lightred] (1) -- (5)-- (2) -- cycle;
            \filldraw[color=lightpink] (1) -- (5) -- (4) -- cycle;
            
            \draw[->] (1) -- (2) node[at end, above] {$e_1$};
            \draw[->, dashed] (1) -- (3) node[at end, above right] {$e_3$};
            \draw[->] (1) -- (4) node[at end, below right] {$e_2$};
            \draw[->] (1) -- (5)  node[at end, below left] {$u$};

            \coordinate (6) at (0, 2); 
            \coordinate (7) at (2.5, 1.2);
            \coordinate (8) at (2.5, -1); 
            \coordinate (9) at (0.6, -2);
            \coordinate (10) at (-1.8, -1.2);
            \coordinate (11) at (-1.8, 1.2);
            \coordinate (12) at (2.1, 2.6);
            \coordinate (13) at (2.1, 0.6); 
            \coordinate (14) at (4.4, -0.3);

            \coordinate (a) at (1, 0.7);
            \coordinate (b) at (1, 3.6); 
            \coordinate (c) at (4, -0.6);
            \coordinate (d) at (0.5, -0.2); 
            \coordinate (e) at (-2.9, -2.5); 
            \coordinate (f) at (4.1, 0.9);

            \coordinate (i) at (-1.25, -1.4);
            \coordinate (j) at (2.5, 0.05);
            \coordinate (k) at (1, 1.65);
            \coordinate (l) at (2.6, 0.4);

            \draw[color=gray] (b) -- (a) -- (d) -- (e) ;
            \draw[color= gray] (a) -- (c);
            \draw[color=gray, dashed] (d) -- (f);

            \draw[color=purple, dashed] (6) -- (12) -- (13) -- (14) -- (8);

   \foreach \x in {(l)}{
        \fill \x circle[radius=2pt];
        }

            \draw[color=purple] (6) -- (7) -- (8) -- (9) -- (10)-- (11) -- (6);

   \foreach \x in {(i), (j), (k)}{
        \fill \x circle[radius=2pt];
        }

        \end{tikzpicture}
        \caption{Intersecting the image of $\varphi_3$ with a tropical plane.}
        \label{fig:phi3_int}
    \end{figure}
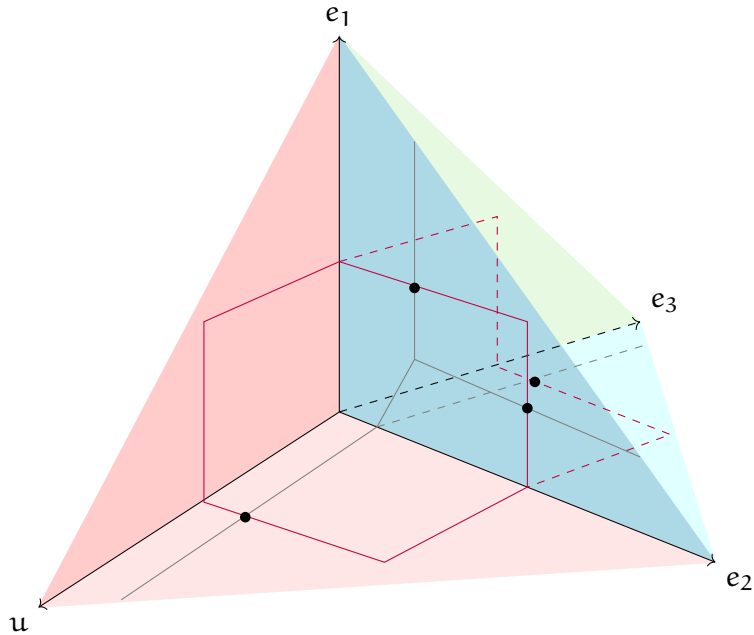
\end{remark}

We give a brief summary of the trivalent parametrised tropical curves of genus $3$ with irreducible superabundancy. We may assume vertex connectivity of at least two, which gives us two possible smoothed graphs, shown in Figure \ref{fig:g3graphs}. Possible choices for the matrix $\eta$ (which we may choose without loss of generality) for these two smoothed graphs are given in Table \ref{tab:trivalentEta}. 

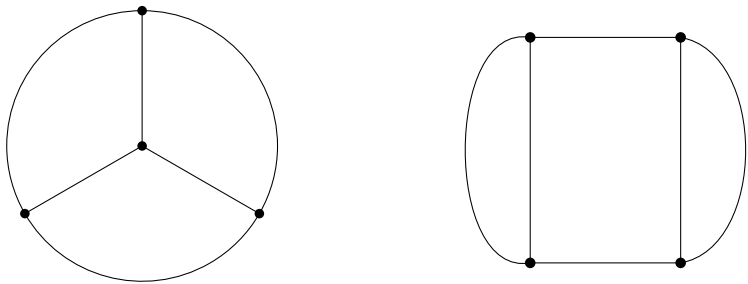
\begin{figure}[h]
    \centering
    \begin{tikzpicture}[scale=0.9]
    \draw (0, 0) arc (90:210:2);
    \draw(0, 0) arc (90:-30:2);
    \draw (-1.7320,-3) arc (210:330:2);
    \coordinate (1) at (0, 0);
    \coordinate (2) at (-1.7320, -3);
    \coordinate (3) at (1.7320, -3);
    \coordinate (4) at (0, -2);
    
    \draw (1) -- (4); 
    \draw (2) -- (4); 
    \draw (3) -- (4);
            
    \foreach \x in {(1), (2), (3), (4)}{
        \fill \x circle[radius=2pt];
        }
    
    \end{tikzpicture} 
    \hspace{2cm}
    \begin{tikzpicture}

    \coordinate (1) at (-1, 0);
    \coordinate (2) at (1, 0);
    \coordinate (3) at (-1, -3);
    \coordinate (4) at (1, -3);
    
    \draw (1) -- (2); 
    \draw (2) -- (4); 
    \draw (1) -- (3);
    \draw (3) -- (4); 
    \draw (2) to[out=-10,in=10] (4); 
    \draw (1) to[out=170,in=-170] (3); 
    
    \foreach \x in {(1), (2), (3), (4)}{
        \fill \x circle[radius=2pt];
        }
    \filldraw[white] (0,-3.5) ;
    \end{tikzpicture}
    \caption{Possible smoothed graphs for genus $3$ tropical curve.}
    \label{fig:g3graphs}
\end{figure}

\begin{table}[h]
	\centering
	\begin{tabular}{ccc} 
		\toprule
		Graph 1 & Graph 2 \\
		\midrule
		$\begin{bmatrix}
			1 & -1 & 0 & 1 & 0 & 0\\
			0 & 1 & -1 & 0 & 1 & 0\\
			-1 & 0 & 1 & 0 & 0 & 1\\
		\end{bmatrix}$ & $\begin{bmatrix}
			1 & 1 & 0 & 0 & 0 & 0\\
			0 & -1 & 1 & 0 & 1 & 1\\
			0 & 0 & 0 & 1 & -1 & 0\\
		\end{bmatrix}$ \\
		\bottomrule	
	\end{tabular}
        \caption{Choices of $\eta$ for the two possible genus $3$ smoothed graphs.}
        \label{tab:trivalentEta}
\end{table}

Suppose a genus 3 curve is superabundant. We have $\lambda^{(i)}, i=1,2,3$ with notation from above and since we are looking for irreducible superabundancy, we may assume that the $\lambda^{(i)}$ are non-zero. If all $\lambda^{(i)}$ are parallel to each other, the whole configuration lies inside an affine hyperplane, and we have planar superabundancy. Supposing otherwise, we consider when $\lambda^{(i)}$ are linearly independent, and linearly dependent. In each case we apply a linear transformation dual to sending the first three (resp. two) $\lambda^{(i)}$ to the first three (resp. two) unit normal vectors. 

The first graph with $\lambda^{(i)}$ linearly independent gives a union of planes that are extensions of the tropical plane in $\RR^3$. Example \ref{Exa:Genus3} is an example of this case. The linearly dependent case is always decomposable, since we may project to $\RR^2$ while preserving the arrangements of planes. In $\RR^2$, an illustrative example is given in Figure \ref{fig:triangle}.

\begin{figure}[h]
    \centering
        \begin{tikzpicture}
        \coordinate (1) at (0, 0);
        \coordinate (2) at (2,0);
        \coordinate (3) at (0, -2);
        \coordinate (4) at (0.5, -0.5);
        \draw (1) -- (2) -- (3) -- (1);
        \draw (1) -- (4); 
        \draw (2) -- (4); 
        \draw (3) -- (4);
        \foreach \x in {(1), (2), (3), (4)}{
        \fill \x circle[radius=2pt];
        }
    \end{tikzpicture}
    \caption{A superabundant subcurve of genus $3$ with underlying smoothed graph $1$.}
    \label{fig:triangle}
\end{figure}
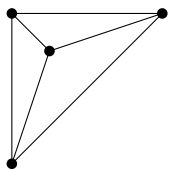

Possible superabundant curves for the second graph, with $\lambda^{(i)}$ linearly dependent and independent are shown in Figure \ref{fig:hyperbolic}. In these examples, we have a $2:1$ map to a tree. This is not true in general since we may add edges while preserving superabundancy.

\begin{figure}[h]
    \centering
    \tikzset{every picture/.style={line width=0.75pt}} 

\begin{tikzpicture}[x=0.75pt,y=0.75pt,yscale=-1,xscale=1]

\draw [color=blue  ,draw opacity=1 ]   (48.67,48.33) -- (122.47,159.47) ;
\draw [color=blue  ,draw opacity=1 ]   (209.13,48.8) -- (144.47,158.8) ;
\draw    (67.33,77) -- (191.8,78.13) ;
\draw    (91.23,112.57) -- (171.47,112.8) ;
\draw    (191.8,78.13) -- (213.13,90.8) ;
\draw    (171.47,112.8) -- (192.8,125.47) ;
\draw    (48.47,94.13) -- (67.33,77) ;
\draw    (72.37,129.7) -- (91.23,112.57) ;
\draw    (72.37,129.7) -- (48.47,94.13) ;
\draw    (72.37,113.7) -- (48.47,78.13) ;
\draw    (192.8,125.47) -- (213.13,90.8) ;
\draw    (192.13,112.13) -- (212.47,77.47) ;
\draw    (171.47,112.8) -- (192.13,112.13) ;
\draw    (191.8,78.13) -- (212.47,77.47) ;
\draw    (48.47,78.13) -- (67.33,77) ;
\draw    (72.37,113.7) -- (91.23,112.57) ;
\draw   (83.67,148.13) -- (238.47,148.13) -- (172.13,184.13) -- (17.33,184.13) -- cycle ;
\draw    (48.47,78.13) -- (21.13,63.47) ;
\draw    (72.37,113.7) -- (47.13,148.8) ;
\draw    (243.8,58.8) -- (212.47,77.47) ;
\draw    (247.13,87.47) -- (213.13,90.8) ;
\draw    (222.47,127.47) -- (192.13,112.13) ;
\draw    (199.13,142.13) -- (192.8,125.47) ;
\draw    (71.8,146.13) -- (72.37,129.7) ;
\draw    (25.8,94.13) -- (48.47,94.13) ;
\draw [color=red  ,draw opacity=1 ]   (122.47,159.47) -- (144.47,158.8) ;
\draw [color=red  ,draw opacity=1 ]   (144.47,158.8) -- (204.47,153.47) ;
\draw [color=red ,draw opacity=1 ]   (144.47,158.8) -- (181.13,168.8) ;
\draw [color=red  ,draw opacity=1 ]   (87.13,152.8) -- (122.47,159.47) ;
\draw [color=red  ,draw opacity=1 ]   (122.47,159.47) -- (73.13,166.13) ;
\draw [color=blue  ,draw opacity=1, dashed]   (133.8,176.8) -- (122.47,159.47) ;
\draw [color=blue  ,draw opacity=1, dashed]  (133.8,176.8) -- (144.47,158.8) ;
\draw [->]   (130.47,120.13) -- (130.47,140.8) ;

\draw    (368.67,78) -- (465.8,78.8) ;
\draw    (335.8,64.8) -- (368.67,78) ;
\draw    (368.67,78) -- (351.13,88.13) ;
\draw    (501.13,90.13) -- (465.8,78.8) ;
\draw    (485.8,70.13) -- (465.8,78.8) ;
\draw    (370,117.33) -- (467.13,118.13) ;
\draw    (337.13,104.13) -- (370,117.33) ;
\draw    (370,117.33) -- (352.47,127.47) ;
\draw    (502.47,129.47) -- (467.13,118.13) ;
\draw    (487.13,109.47) -- (467.13,118.13) ;
\draw    (335.8,64.8) -- (337.13,104.13) ;
\draw    (351.13,88.13) -- (352.47,127.47) ;
\draw    (485.8,70.13) -- (487.13,109.47) ;
\draw    (501.13,90.13) -- (502.47,129.47) ;
\draw    (335.8,64.8) -- (324.6,52.8) ;
\draw    (324.6,113.47) -- (337.13,104.13) ;
\draw    (337.93,140.13) -- (352.47,127.47) ;
\draw    (329.93,74.13) -- (351.13,88.13) ;
\draw    (485.8,70.13) -- (500.6,54.13) ;
\draw    (487.13,109.47) -- (505.93,114.8) ;
\draw    (501.13,90.13) -- (515.27,76.8) ;
\draw    (502.47,129.47) -- (511.27,142.8) ;
\draw   (336.6,150.4) -- (558.23,150.4) -- (503.2,183.73) -- (281.57,183.73) -- cycle ;
\draw [color=red  ,draw opacity=1 ]   (371.33,166.67) -- (468.47,167.47) ;
\draw [color=red ,draw opacity=1 ]   (338.47,153.47) -- (371.33,166.67) ;
\draw [color=red  ,draw opacity=1 ]   (371.33,166.67) -- (321.27,174.13) ;
\draw [color=red ,draw opacity=1 ]   (505.93,175.47) -- (468.47,167.47) ;
\draw [color=red ,draw opacity=1 ]   (507.93,156.8) -- (468.47,167.47) ;
\draw[->]    (416.47,123.47) -- (416.47,144.13) ;

\end{tikzpicture}
    \caption{Superabundant curves of genus $3$ with underlying smoothed graph 2.}
    \label{fig:hyperbolic}
\end{figure}
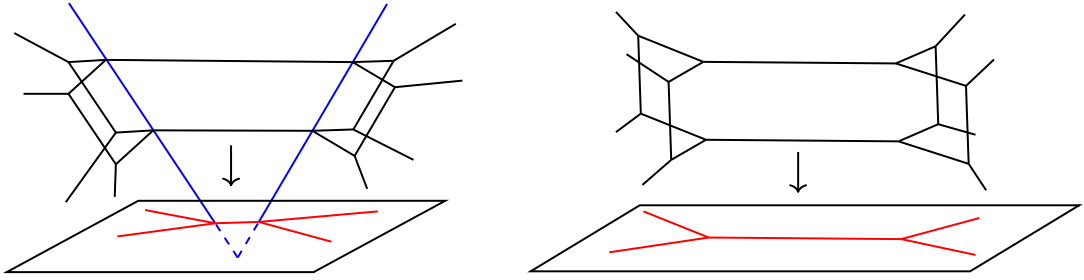

\begin{remark}
     We expect graphs of tree-gonality less than $ \lfloor (g+3)/2 \rfloor$ to also give examples of superabundant tropical curves.  
\end{remark}

\section{A genus 4 example}
\label{section3} \label{sec: genus-4}

While the method outlined in the section above generalises to higher genues, it becomes quickly difficult to track affine subspaces and their intersections. However, the method does suggest the existence of many ``new'' examples in every genus. We give an example in genus $4$. 

\begin{example}
The subcurve $\varphi_4': \Gamma_4 \rightarrow \RR^4$ is constructed from the smoothed graph which is a trivalent graph with 6 vertices, as shown in Figure \ref{fig:smoothed4}. Label the vertices as shown.

\begin{figure}[h]
        \centering
        \begin{tikzpicture}
            \coordinate (a) at (0, 2);
            \coordinate (b) at (+1.73, 1);
            \coordinate (c) at (1.73, -1);
            \coordinate (d) at (0, -1.87);
            \coordinate (e) at (-1.73, -1);
            \coordinate (f) at (-1.73, 1);

            \draw[red] (a) -- (b);
            \draw[red] (c) -- (f);
            \draw[red] (d) -- (e);
            \draw[blue] (a) -- (d);
            \draw[blue] (c) -- (b);
            \draw[blue] (f) -- (e);
            \draw[green] (a) -- (f);
            \draw[green] (e) -- (b);
            \draw[green] (d) -- (c);

            \node[above] at (a) {$a$};
            \node[above right] at (b) {$b$};
            \node[below right] at (c) {$c$};
            \node[below] at (d) {$d$};
            \node[below left] at (e) {$e$};
            \node[above left] at (f) {$f$};

               \foreach \x in {(a), (b), (c), (d), (e), (f)}{
        \fill \x circle[radius=2pt];
        }
            
        \end{tikzpicture}
        \caption{A $3$ regular graph with $6$ vertices.}
        \label{fig:smoothed4}
    \end{figure}
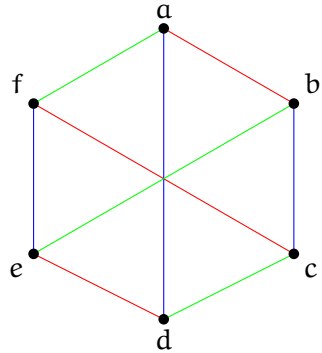

The curve is contained in a product $L^\trop\times Q^\trop\subset \mathbb R^4$, where $L^\trop$ is a tropical line in $\RR^2$ and $Q^\trop$ is a tropical conic in $\RR^2$. The projections to $L^\trop$ and $Q^\trop$ are given in Figure \ref{fig:phi4}. Each segment consists of four edges. The segment from $a$ to $b$ has edges with slopes: $(0,1,0,0)$ (go up a ray in $L^\trop$), $(0, 0, 0, 1)$, $(0, 0, 1, 0)$ (travel along edges of $Q^\trop$), $(0, -1, 0, 0)$ (return down ray in $L^\trop$). The others are analogous. 

    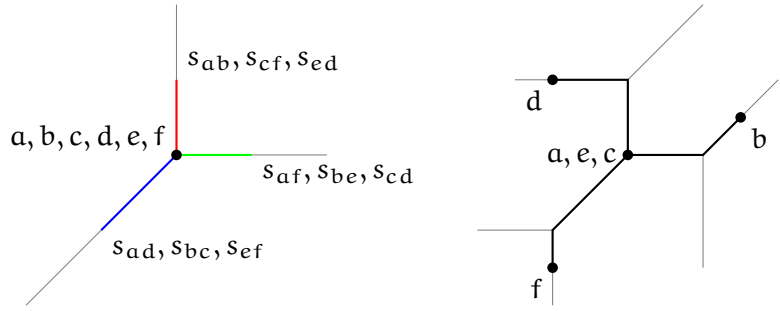
\begin{figure}[h]
        \centering
        \begin{tikzpicture}
            \coordinate (1) at (0, 0);
            \coordinate (2) at (1, 0);
            \coordinate (3) at (0, 1);
            \coordinate (4) at (2, 1);
            \coordinate (5) at (1, 2);
            \coordinate (6) at (1, -1.5);
            \coordinate (7) at (-1.5, 1);
            \coordinate (8) at (-1, -1);
            \coordinate (9) at (-2, -1);
            \coordinate (10) at (-1, -2);
            \coordinate (b) at (1.5, 0.5);
            \coordinate (d) at (-1, 1);
            \coordinate (f) at (-1, -1.5);

            \draw[color=gray] (1) -- (2) -- (4);
            \draw[color=gray] (1) -- (3) -- (5);
            \draw[color=gray] (2) -- (6);
            \draw[color=gray] (3) -- (7);
            \draw[color=gray] (1) -- (8) -- (9);
            \draw[color=gray] (8) -- (10);
            \draw[thick] (1) -- (2) -- (b);
            \draw[thick] (1) -- (3) -- (d);
            \draw[thick] (1) -- (8) -- (f);

            \node[left] at (1) {$a, e, c$};
            \node[below right] at (b) {$b$};
            \node[below left] at (d) {$d$};
            \node[below left] at (f) {$f$};

               \foreach \x in {(1), (b), (d), (f)}{
        \fill \x circle[radius=2pt];
        }

            \coordinate (s) at (-6, 0);
            \coordinate (t) at (-6, 2);
            \coordinate (u) at (-4, 0);
            \coordinate (v) at (-8, -2);
            \coordinate (w) at (-6, 1);
            \coordinate (x) at (-5, 0);
            \coordinate (y) at (-7, -1);

            \draw[color=gray] (s) -- (t);
            \draw[color=gray] (s) -- (u);
            \draw[color=gray] (s) -- (v);
            \draw[thick, color=red] (s) -- (w);
            \draw[thick, color=green] (s) -- (x);
            \draw[thick, color=blue] (s) -- (y);

            \node[above left] at (s) {$a, b, c, d, e, f$};
            \node[above right] at (w) {$s_{ab}, s_{cf}, s_{ed}$};
            \node[below right] at (x) {$s_{af},s_{be},s_{cd}$};
            \node[below right] at (y) {$s_{ad}, s_{bc}, s_{ef}$};
            \fill (-6,0) circle[radius=2pt];
        \end{tikzpicture}
        \caption{Projection of $\varphi_4'(\Gamma_4')$ to $L^\trop$ and $Q^\trop$. Here $s_{v_0v_1}$ denotes a segment between two vertices $v_0, v_1$ in the smoothed graph (which consists of four edges).}
        \label{fig:phi4}
    \end{figure}

    We may extend to a tropical curve $\varphi_4: \Gamma_4 \rightarrow \RR^4$ by adding legs at vertices which are not balanced. The projections to $L^{\trop}$ and $Q^{\trop}$ are each harmonic of degree $3$. 
    
    The dimension of the deformation space of $(\Gamma_4, \varphi_4)$ is $21$, whereas the expected dimension is $20$. It has irreducible superabundancy, since removing any segment creates a segment which is not contained in a hyperplane. However, a projection to $L^{\trop}$ gives a superabundant curve, so it does not have indecomposable superabundancy. 

    %
    %

\end{example}

\section{Non-realizable curves} \label{sec:4}

\subsection{Moduli spaces with larger than expected dimension}\label{sec:bigalg}

Superabundancy is not sufficient for non-realizability, even generically for curves of the given combinatorial type, since the moduli space of the corresponding algebraic curves may also have dimension larger than expected. 

For example, the triple tuning fork $\varphi_2: \Gamma_2 \rightarrow \RR^2$ is superabundant with $\dim \operatorname{Def}(\Gamma_2) = 3$, but we will show that every tropical curve with this combinatorial type is realizable. Algebraically, we are looking for genus $2$ curves with $3$ markings mapping to $\PP^2$, such that the markings map to the different coordinate lines, and the tangency along each coordinate line is $2$. Since $C$ is smooth, such a map $C \rightarrow \PP^2$ corresponds to a rational map from $C$ to the dense open torus $(\CC^*)^2$ in $\PP^2$, defined away from the marked points. Equivalently, this is given by the two principal divisors $\operatorname{div}(x)$ and $\operatorname{div}(y)$, where $x$ and $y$ are the two coordinates on the first standard affine patch of $\PP^2$. Any such divisor pair gives $(\mathbb{C}^*)^2$ worth of rational maps. Let $\cM^\circ$ be the locus of maps $(C, p, q, r) \rightarrow \PP^2$ with the tangency conditions above, and let $D_2(A)$ be the set of $(C, p, q, r) \in \cM_{2, 3}$ for which such a map exists. Then the natural map $\cM^\circ \rightarrow D_2(A)$ is a $(\CC^*)^2$-bundle. 

\begin{remark}
    For the deformation space of tropical curves, we took the quotient by the action given by translation, which corresponds algebraically to scaling the rational maps, i.e. taking the quotient corresponds to going from $\cM^\circ$ to $D_2(A)$. 
\end{remark}

\begin{proposition}
    The space $D_2(A)$ is a $3$-dimentional locus in $\cM_{2, 3}$. 
\end{proposition}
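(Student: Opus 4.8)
The plan is to show that the two line-bundle conditions force $p$, $q$, $r$ to be three distinct Weierstrass points of $C$, after which the dimension count is immediate. I would first record the ambient dimension: since $\dim \cM_{2,3} = 3g-3+n = 6$ and each requirement $\cO_C(2p-2r)\cong\cO_C$ is the vanishing of a point in the $g=2$ dimensional Jacobian $\operatorname{Jac}(C)$, the naive expected codimension is $2+2=4$, giving expected dimension $6-4=2$. The content of the proposition is precisely that the true dimension is one larger, which is exactly what makes this combinatorial type realizable despite superabundancy.

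The key step is a divisor-theoretic analysis of a single condition $\cO_C(2p-2r)\cong\cO_C$, i.e. $2p\sim 2r$. Since the markings of a point of $\cM_{2,3}$ are distinct we have $p\neq r$, and I would apply Riemann--Roch on the genus-$2$ curve $C$: for a degree-$2$ divisor $D$ one gets $h^0(D)=1+h^0(K_C-D)$, so $h^0(D)=1$ unless $D\sim K_C$, in which case $h^0(D)=2$. If $2p\not\sim K_C$ then $|2p|=\{2p\}$ is a single effective divisor; as $2r$ is effective and linearly equivalent to $2p$, this forces $2r=2p$ as divisors, hence $p=r$ in characteristic $0$, contradicting distinctness. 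Therefore $2p\sim K_C$, which says exactly that $p$ is a ramification point of the hyperelliptic map $|K_C|\colon C\to\PP^1$, i.e. a Weierstrass point; and then $2r\sim 2p\sim K_C$ forces $r$ to be Weierstrass as well.

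Applying this to both conditions (and noting the converse is clear, since any two distinct Weierstrass points $x,y$ satisfy $2x\sim K_C\sim 2y$) identifies $D_2(A)$ with the locus of $(C,p,q,r)$ where $C$ is a genus-$2$ curve and $p,q,r$ are three \emph{distinct} Weierstrass points. To finish I would observe that the six Weierstrass points of a genus-$2$ curve form a finite cover of $\cM_2$, so the projection $D_2(A)\to\cM_2$ is surjective with finite fibers (the $6\cdot 5\cdot 4$ ordered triples of distinct Weierstrass points). Since $\dim\cM_2=3$, this yields $\dim D_2(A)=3$.

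The step requiring the most care is the genus-$2$ divisor analysis, namely ruling out solutions of $2p\sim 2r$ with $p\neq r$ other than the Weierstrass ones; but this follows cleanly from the $h^0$ computation above, using that on a genus-$2$ curve a degree-$2$ class moves in a pencil only when it is canonical. The remaining dimension count, via finiteness of the Weierstrass cover over $\cM_2$, is routine.
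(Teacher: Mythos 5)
Your proof is correct, and it rests on the same geometric fact the paper's proof uses: every genus $2$ curve is hyperelliptic, and any two of its six Weierstrass (branch) points $x,y$ satisfy $2x\sim K_C\sim 2y$, so marking three of them produces a point of $D_2(A)$ over any $[C]\in\cM_2$. The difference is one of completeness. The paper's proof establishes only the surjectivity of the forgetful map $D_2(A)\to\cM_2$, which gives $\dim D_2(A)\geq 3$; the upper bound --- that the fibers are not positive-dimensional --- is left implicit. Your Riemann--Roch step supplies exactly that missing half: by showing that $2p\sim 2r$ with $p\neq r$ forces $2p\sim K_C$ (since otherwise $h^0(2p)=1$ and $2r=2p$ as divisors), you identify $D_2(A)$ precisely with the locus of genus $2$ curves carrying three distinct marked Weierstrass points, so the map to $\cM_2$ is finite (of degree $6\cdot 5\cdot 4$) as well as surjective, and $\dim D_2(A)=3$ follows on the nose rather than as a lower bound. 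So your argument is best described as a strengthening of the paper's proof rather than a different route. One small remark: the deduction $2r=2p\Rightarrow r=p$ is an equality of divisors, i.e.\ of formal sums of points, so it needs no characteristic-zero hypothesis; the characteristic only matters later, in the realizability statement via Hurwitz covers.
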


\begin{proof}
    We will show that $D_2(A)$ surjects onto $\cM_2$ (which is three dimensional) under the forgetful map $\cM_{2, 3} \rightarrow \cM_2$. Now $D_2(A)$ consists of those curves $C$ where both $\cO_C(2p - 2r)$ and $\cO_C(2q - 2r)$ are trivial. All genus $2$ curves admit a $2:1$ map to $\PP^1$, which by Riemann--Hurwitz has $6$ branching points. Given any branching point $p$, have that $2p$ is the pullback of a single point in $\PP^1$. Hence, for any branching points $p, q$, we have that $2p \sim 2q$. Thus for any $C \in \cM_2$, we may choose $p, q, r$ to be branching points of the cover, and then $(C, p, q, r) \in D_2(A)$. 
\end{proof}

\begin{proposition}
    For $\ch k \neq 2$, any tropical curve with the combinatorial type of $\varphi_2$ is realizable. 
\end{proposition}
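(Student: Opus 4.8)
The plan is to realize every such curve by lifting its hyperelliptic structure, guided by the previous proposition. The relation $\cO_C(2p-2r)\cong\cO_C(2q-2r)\cong\cO_C$ says precisely that $p,q,r$ are Weierstrass points of the (necessarily hyperelliptic) genus $2$ curve $C$; the induced map to $\PP^2$ is given by three sections of $\cO_C(2p)\cong\cO_C(2q)\cong\cO_C(2r)$, a degree-$2$ line bundle with $h^0=2$, so its image is a line $\mathbb{L}\cong\PP^1$ covered $2$-to-$1$. Correspondingly, $\varphi_2$ is a harmonic degree-$2$ cover of the standard tropical line $L^{\trop}\subset\RR^2$ (rays $e_1,e_2,-e_1-e_2$), with the three weight-$2$ legs lying over its three ends. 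Realizing $\varphi_2$ thus amounts to producing a hyperelliptic double cover over a valued field and post-composing with a linear embedding of its target into $\PP^2$.

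Concretely, working over $K=\CC\{\{t\}\}$, I would fix a line $\mathbb{L}\subset\PP^2$ meeting the three coordinate lines in points $P,Q,R$ and with $\operatorname{trop}(\mathbb{L})=L^{\trop}$, then take a double cover $\pi\colon C\to\mathbb{L}$ branched at $P,Q,R$ together with three further points $b_4,b_5,b_6$. Setting $p,q,r$ to be the ramification points over $P,Q,R$, we have $\pi^*P=2p$ etc., so the composite $f\colon C\to\mathbb{L}\hookrightarrow\PP^2$ satisfies $f^*\{x=0\}=2p$, $f^*\{y=0\}=2q$, $f^*\{z=0\}=2r$; hence $(C,p,q,r)\in D_2(A)$, the three tangencies have order $2$, and the three weight-$2$ legs of $\varphi_2$ appear as the corresponding contact orders. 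The reason no further condition is needed is exactly the content of the previous proposition: the branch points of a double cover automatically satisfy $2b_i\sim g^1_2$.

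The substantive step is to match the metric and parametrization: for a generic point of the deformation cone $\sigma_\Theta$, I would choose the valuations of $P,Q,R,b_4,b_5,b_6\in\PP^1(K)$ so that the minimal Berkovich skeleton of $C$ is the prescribed theta graph with the correct edge lengths and $\operatorname{trop}(f)=\varphi_2$. This uses the standard description of the skeleton of a hyperelliptic curve via the tree spanned by its branch points in $\PP^{1,\an}$, together with the fact that lifting a finite harmonic morphism to a genus-$0$ target is unobstructed. As $\sigma_\Theta$ and $D_2(A)$ are both $3$-dimensional (the latter by the previous proposition), the branch-point valuations carry just enough freedom to realize a dense subset of $\sigma_\Theta$; since the realizable locus is closed and conical, it is then all of $\sigma_\Theta$. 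I expect the main obstacle to lie precisely here — translating a point of $\sigma_\Theta$ into an admissible configuration of branch-point valuations and checking the skeleton is metrically correct — rather than in the existence of the lift, which the genus-$0$ target guarantees.

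Finally, the hypothesis $\ch k\neq 2$ enters exactly where the hyperelliptic picture is used. In characteristic $2$ a double cover is Artin--Schreier and wildly ramified, so ``order-$2$ tangency $=$ simple branch point'' fails and the relation $2b_i\sim g^1_2$ no longer governs $D_2(A)$; with $\ch k\neq 2$ the cover is Kummer (locally $y^2=u$), the six branch points behave as expected, and $\operatorname{trop}(f)$ recovers the weight-$2$ legs of $\varphi_2$.
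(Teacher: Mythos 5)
Your structural picture is correct and matches the paper's starting point: the tuning fork is a degree-$2$ harmonic cover of the tropical line, and algebraically the relevant maps to $\PP^2$ factor through a line via the hyperelliptic pencil, with the tangency points among the Weierstrass points. However, the proof has a genuine gap, and it sits exactly where you say you expect the main obstacle to be: you never show that \emph{every} point of $\sigma_\Theta$ (i.e.\ every admissible assignment of edge lengths to the tuning fork) arises from some configuration of branch-point valuations with the skeleton and the tropicalized map coming out metrically correct. That step \emph{is} the proposition; everything before it (existence of some hyperelliptic cover with the right tangencies) only produces points of $D_2(A)$, not a surjection onto $\sigma_\Theta$. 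The fallback argument --- $\dim D_2(A)=\dim\sigma_\Theta=3$, hence the realizable locus is dense, hence by closedness it is everything --- is a non sequitur at the density step: the tropicalization of a moduli space is a closed polyhedral cone of dimension $\leq \dim D_2(A)$, but equality of ambient dimensions gives no control on whether its image is all of $\sigma_\Theta$, a proper full-dimensional subcone, or something smaller. (This is precisely the subtlety the surrounding paper is organized around: tropicalization images of moduli spaces are in general proper subcones cut out by extra conditions, as in well-spacedness for genus $1$.) Closed $+$ conical $+$ dense would indeed finish the argument, but density is the unproven claim, not a consequence of the dimension count.

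The paper closes this gap by running the lifting in the opposite order: rather than building a cover and then computing its skeleton, it applies the Amini--Baker--Brugall\'e--Rabinoff lifting theorem \cite[Proposition 3.3]{ABBR15} directly to the harmonic morphism $f\colon\Gamma_2\to L^{\trop}$ (after subdividing the legs of the tropical line so that vertices map to vertices). That theorem has the metric control built in --- the lift is a map of curves whose associated metrized complexes realize exactly the prescribed edge lengths --- so all that remains is \emph{local} realizability at the vertices of $\Gamma_2$: at the two inner vertices $f$ is a local isomorphism, and at each outer vertex one needs a degree-$2$ map $(\PP^1,p,q,r)\to(\PP^1,0,\infty)$ with ramification profile $(1,1)$ over $0$ and $(2)$ over $\infty$, which exists; tameness is where $\ch k\neq 2$ enters, matching your characteristic-$2$ discussion. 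Your passing appeal to ``lifting a finite harmonic morphism to a genus-$0$ target is unobstructed'' is essentially this theorem (though as stated it is too strong: local Hurwitz-existence conditions and tameness are genuinely needed); had you invoked it for $f$ itself, with its metric statement, the detour through branch-point valuations and the unproved density claim would disappear.
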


\begin{proof}
    The map from the triple tuning fork to $\RR^2$ factors through the tropical line. We can add a vertex on each leg of the tropical line so that vertices are mapped to vertices (see Figure \ref{fig:harmonic}). Let the resulting map be $f$.

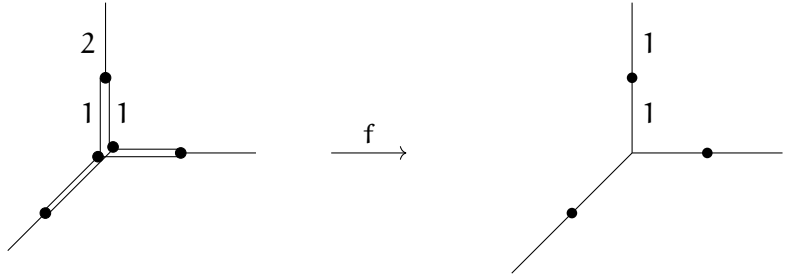
\begin{figure}[h]
    \centering
    \begin{tikzpicture}
        \filldraw [black] (-0.1, -0.05) circle (2pt);
        \filldraw [black] (0.1, 0.08) circle (2pt);
        \filldraw [black] (1, 0) circle (2pt); 
        \filldraw [black] (0, 1) circle (2pt); 
        \filldraw [black] (-0.8, -0.8) circle (2pt);
        \draw (-0.05,-0.05) -- (1, -0.05);
        \draw (-0.07,-0.05) -- (-0.07, 1);
        \draw (-0.1,-0.05) -- (-0.8,-0.75);
        \draw (0.1, 0.05) -- (1, 0.05);
        \draw (0.05, 0.05) -- (0.05, 1);
        \draw (0.1, 0.05) -- (-0.75, -0.8);
        \draw (-0.8, -0.8) -- (-1.3,-1.3);
        \draw (1, 0) -- (2, 0);
        \draw (0, 1) -- (0, 2);

        \draw[->] (3, 0) -- (4, 0);

        \coordinate (1) at (7, 0);
        \coordinate (2) at (9,0) ;
        \coordinate (3) at (7, 2);
        \coordinate (4) at (5.4, -1.6);
        \coordinate (5) at (8, 0);
        \coordinate (6) at (7, 1);
        \coordinate (7) at (6.2, -0.8);
        \node[above left] at (0,1.2) {$2$};
        \node[below left] at (0, 0.8) {$1$};
        \node[below right] at (0, 0.8) {$1$};
        \node[above right] at (7, 1.2) {$1$};
        \node[below right] at (7, 0.8) {$1$};
        \node [above] at (3.5,0) {$f$};
        
        \draw (1) -- (2);
        \draw (1) -- (3) ;
        \draw (1) -- (4);

        \foreach \x in {(5), (6), (7)}{
        \fill \x circle[radius=2pt];
        }

    \end{tikzpicture}
    \caption{Morphism from $\Gamma_2$ to tropical line satisfying local ``Riemann--Hurwitz'' condition.}
    \label{fig:harmonic}
\end{figure}

    By the results of Amini--Baker--Brugall\'e--Rabinoff \cite[Proposition 3.3]{ABBR15}, $f$ is realizable as a map between algebraic curves if it is locally realizable at vertices of $\Gamma_2$.\footnote{Here we require $\ch k \neq 2$ since the ramification degrees should not divide the $\ch k$. Otherwise, the result will extend to positive characteristic.} For the inner vertices, $f$ is a local isomorphism. For the outer vertices, we have two edges of weight $1$ mapping to an edge of weight $1$, and an edge of weight $2$ mapping to another edge of weight $1$. This corresponds (without loss of generality) to a morphism between marked curves $(\PP^1, p , q, r) \rightarrow (\PP^1, 0 , \infty)$, with $p, q \mapsto 0$, $r \mapsto \infty$, and ramification degrees $1, 1, 2$ respectively. Such a morphism exists, and the result follows. 
\end{proof}

\subsection{Non-realizability results}

We finish with a proof of Theorem \ref{thmA}, which shows that the superabundant examples we have given do in fact give classes of non-realizable curves. Tropicalizations of moduli spaces are constructed in \cite{abramovich_tropicalization_2012}. We will use the following result of Ulirsch:

\begin{theorem}
    \cite[Theorem 1.1]{U15} Given a closed subset $Y$ of a log regular variety $X$, we have a tropical variety $\operatorname{Trop}_X(Y)$ associated to $Y$ relative to $X$. $\operatorname{Trop}_X(Y)$ admits the structure of a finite rational polyhedral cone complex of dimension $\leq \dim Y$.
\end{theorem}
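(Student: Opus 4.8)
The plan is to construct $\operatorname{Trop}_X(Y)$ as the image of an analytic tropicalization map and then import the structure theory of toric tropicalization through local charts. The starting point is Kato's structure theory for log regular schemes: étale-locally $X$ admits a strict smooth morphism to an affine toric variety $\operatorname{Spec} k[P]$ for a fine saturated monoid $P$, so that the characteristic sheaf $\overline{M}_X = M_X/\mathcal{O}_X^\times$ is governed by the stalks $\overline{M}_{X,x}$, each a finitely generated sharp monoid. First I would assemble the target cone complex $\Sigma_X$ by gluing the dual cones $\sigma_x = \operatorname{Hom}(\overline{M}_{X,x}, \RR_{\geq 0})$ along the generization maps between strata; this is the ``Kato fan'' realized as a generalized cone complex, and its rational structure is built in, since each $\sigma_x$ is a rational polyhedral cone inside the finite-dimensional space $\operatorname{Hom}(\overline{M}_{X,x}^{\mathrm{gp}}, \RR)$.

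Next I would define the tropicalization map on points. A point of the analytification $Y^{\operatorname{an}}$, equivalently a valuation on a residue field over the structure sheaf, restricts to a monoid homomorphism $\overline{M}_{X,x} \to \RR_{\geq 0}$, and this assignment is continuous and compatible with generization, producing a map $\operatorname{trop}_X : Y^{\operatorname{an}} \to \Sigma_X$. I would then \emph{define} $\operatorname{Trop}_X(Y)$ to be the set-theoretic image. The whole content of the theorem then lies in showing this image is a finite rational polyhedral cone complex of the asserted dimension.

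For the polyhedral structure and the dimension bound I would argue chart by chart. On a toric chart $\operatorname{Spec} k[P]$, the image of $Y^{\operatorname{an}}$ under $\operatorname{trop}_X$ is exactly the classical tropicalization of $Y$ inside $\operatorname{Hom}(P, \RR_{\geq 0})$, and there the Bieri--Groves theorem (constant-coefficient case) guarantees that the image is a finite rational polyhedral set of dimension at most $\dim Y$, with equality on each irreducible component of top dimension. Because the charts are strict and compatible with generization, these local polyhedral decompositions can be chosen compatibly and glued to a global cone complex structure on $\operatorname{Trop}_X(Y)$, and the local dimension bounds assemble to $\dim \operatorname{Trop}_X(Y) \leq \dim Y$.

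The main obstacle is the gluing step: one must verify that the étale-local toric charts yield tropicalizations that agree on overlaps and that the resulting polyhedral subdivisions are mutually compatible, so that the pieces genuinely patch into a single finite generalized cone complex rather than a mere collection of local models. This hinges on the functoriality of $\operatorname{trop}_X$ under strict morphisms and on a verification that passing to deeper strata (generization) corresponds to passing to faces of the cones. Granting this, finiteness follows from quasi-compactness of $X$ together with the finiteness of its stratification, and rationality is inherited from the integral monoid structure at each stratum.
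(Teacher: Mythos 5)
The paper does not prove this statement: it is imported verbatim from Ulirsch \cite[Theorem 1.1]{U15} and used as a black box in the proof of Theorem A (to bound $\dim \operatorname{trop}_X(\mathcal{M}_g(A))$ by $\dim \mathcal{M}_g(A)$). So there is no internal proof to compare against; the relevant comparison is with Ulirsch's original argument, and your sketch does follow that route: assemble the (generalized) cone complex $\Sigma_X$ from the Kato fan of characteristic monoids, define $\operatorname{trop}_X$ by restricting valuations to the stalks $\overline{M}_{X,x}$, declare $\operatorname{Trop}_X(Y)$ to be the image of the analytification of $Y$, and reduce through strict charts to the constant-coefficient Bieri--Groves theorem for both the polyhedrality and the bound $\dim \operatorname{Trop}_X(Y) \leq \dim Y$.

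Two caveats separate your outline from a complete proof. First, the analytification must be Thuillier's compact analytification $X^{\beth}$ over the trivially valued field (seminorms bounded by $1$), not the full Berkovich space: only on $X^{\beth}$ is every regular function of valuation $\geq 0$ and every unit of valuation exactly $0$, so that the restriction map factors through $\overline{M}_{X,x} = M_{X,x}/\mathcal{O}_{X,x}^{\times}$ and lands in the nonnegative cone $\sigma_x$; on the full analytification the map you write down is not defined. Second, the step you flag as ``the main obstacle'' --- chart-independence and gluing --- is precisely where the content of the cited theorem lies; in \cite{U15} it is handled by establishing functoriality of $\operatorname{trop}$ under strict (\'etale) morphisms of log schemes, under which characteristic monoids and hence cones are canonically identified, together with the fact that generization corresponds to face maps of the Kato fan. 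Relatedly, \'etale-local strict smooth charts $X \to \operatorname{Spec} k[P]$ are available because log regular varieties in characteristic $0$ are toroidal (log smooth over $k$); for log regular schemes in general, Kato's structure theory is a statement about completed local rings, so a proof cannot simply posit such charts. Granting these points, your stratum-by-stratum dimension count, assembled over the finitely many strata of $X$, is the correct conclusion of the argument.
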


Consider $\mathcal{M}_g(A)$, where $A$ is a given $r \times n$ degree matrix, and ${\cM}_g(A)$ is the moduli space of 
\begin{enumerate}[(i)]
    \item a curve $C$ of genus $g$; 
    \item $n$ points on $C$; 
    \item $r$ morphisms $f_i:C \rightarrow \PP$, with order of vanishing of $f_i$ at $p_j$ given by $A_{ij}$.
\end{enumerate}
This is precisely the interior of the space of logarithmic stable maps to the toric variety $(\PP^1)^r$ with the toric logarithmic structure \cite{AC14, GS13}. Note that it is equivalently the space of such maps to any toric variety of dimension $r$ with the toric logarithmic structure, as the interior of the moduli space does not depend on the precise toric variety \cite{AW18}. In general, $\dim \cM_g(A)$ may be larger than expected, but for suitable choice of matrix $A$, we can calculate the dimension exactly.

\begin{proposition} \label{lem:dim}
    Let $A$ be the matrix whose columns are given by $d$ copies of $e_1$, $e_2$, \ldots, $e_r$ (for standard basis vectors $e_i \in \ZZ^r$), followed by $d$ copies of $-(e_1 + \ldots + e_r)$.
    If $d > 2g-2$, then the moduli space $\mathcal{M}_g(A)$ has dimension $(r+1)d + (r-3)(1-g)$.     
\end{proposition}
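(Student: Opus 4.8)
The plan is to realise $\mathcal{M}_g(A)$ as the zero locus of a relative Abel--Jacobi map over $\mathcal{M}_g$ and to show, using the hypothesis $d>2g-2$, that this map is a fibrewise submersion; the dimension then drops by exactly $rg$ from that of $\mathcal{M}_{g,m}$.

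First I would unwind the definition. Writing $m=(r+1)d$ for the number of columns of $A$, and noting that each morphism $f_i$ is recorded only up to the translation action (rescaling on $\PP$) and is therefore determined by its divisor, the space $\mathcal{M}_g(A)$ is the locus in $\mathcal{M}_{g,m}$ on which every divisor $D_i=\sum_j A_{ij}p_j$ is principal. Reading off the columns, the $m$ marked points split into $r+1$ groups of $d$ points each: a group of common poles giving $E_0=\sum_l p'_l$ (from the columns $-(e_1+\cdots+e_r)$) and, for each $i$, the zeros $E_i=\sum_k q_{i,k}$ of $f_i$ (from the columns equal to $e_i$). Principality of $D_i$ is exactly the condition $E_i\sim E_0$, so $\mathcal{M}_g(A)$ parametrises configurations for which the $r+1$ effective divisors $E_0,E_1,\dots,E_r$ of degree $d$ all lie in one linear equivalence class.

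Next I would fibre over $\mathcal{M}_g$ and make the count. Over a fixed curve $C$ with distinct marked points, consider the difference-of-Abel--Jacobi map
\[
\Psi_C\colon C^m\longrightarrow \mathrm{Jac}(C)^r,\qquad (q_{\bullet,\bullet},p'_\bullet)\longmapsto \bigl([E_i]-[E_0]\bigr)_{i=1}^r,
\]
whose fibre $\Psi_C^{-1}(0)$ is precisely the fibre of $\mathcal{M}_g(A)\to\mathcal{M}_g$ over $[C]$. The crux is that $\Psi_C$ is a submersion. This is the standard differential computation for the Abel--Jacobi map $\mathrm{Sym}^dC\to\mathrm{Pic}^dC$: at an effective divisor $E$ of degree $d$ the cokernel of the differential is $H^1\bigl(C,\mathcal{O}_C(E)\bigr)$, which vanishes exactly when $\deg E=d>2g-2$. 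Since the $r$ blocks of variables $q_{i,\bullet}$ are disjoint and the $i$-th block alone already surjects onto the $i$-th factor $\mathrm{Jac}(C)$, the differential $d\Psi_C$ is surjective everywhere. Hence $\Psi_C^{-1}(0)$ is smooth of dimension $m-rg=(r+1)d-rg$, and this holds over \emph{every} $[C]$ because $h^1(\mathcal{O}_C(E))=0$ for all degree-$d$ classes, not merely generic ones. Surjectivity of Abel--Jacobi for $d\ge g$ makes the forgetful map $\mathcal{M}_g(A)\to\mathcal{M}_g$ dominant, so
\[
\dim\mathcal{M}_g(A)=\dim\mathcal{M}_g+\bigl((r+1)d-rg\bigr)=3g-3-rg+(r+1)d .
\]

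The content of the statement — and the main point to get right — is that the $r$ principality conditions are \emph{independent}, cutting codimension exactly $rg$ rather than more. This is precisely where $d>2g-2$ enters: it forces every complete degree-$d$ linear system to be nonspecial, so the Abel--Jacobi map carries no special divisors and is everywhere submersive, ruling out the jumps in fibre dimension that would otherwise produce excess dimension. A minor secondary point is that the points must be distinct to land in $\mathcal{M}_{g,m}$; but the locus of $m$-tuples with a repeated coordinate is closed of lower dimension, so a general point of each (equidimensional, nonempty) fibre $\Psi_C^{-1}(0)$ already has distinct coordinates, leaving the dimension count unchanged.
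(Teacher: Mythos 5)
Your reformulation of $\cM_g(A)$ as the locus in $\cM_{g,m}$ where the divisors $E_i-E_0$ are simultaneously principal is faithful to the paper's definition, and your route is genuinely different from the paper's: the paper maps $\cM_g(A)$ \'etale onto the space of degree-$d$ maps to $\PP^r$ meeting the coordinate hyperplanes transversely, and then fibres that space over the universal Picard variety, computing the vertical tangent space as $\bigl(\oplus_{i}H^0(C,L)\bigr)$ modulo scaling. Both arguments rest on the same input (nonspecialty of every degree-$d$ line bundle when $d>2g-2$), but your key step contains a genuine error: $\Psi_C$ is \emph{not} a submersion everywhere on $C^m$. The cokernel computation you quote --- cokernel equal to $H^1(C,\cO_C(E))$ --- is valid for the Abel--Jacobi map on $\mathrm{Sym}^d C$. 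Your map lives on the ordered product, and $C^d\to \mathrm{Sym}^d C$ is ramified along the diagonals: at a tuple with repeated entries, the image of the differential of $C^d\to \mathrm{Pic}^d C$ is spanned only by the Abel--Jacobi tangent lines at the \emph{support} points, so its cokernel is dual to $H^0(C,K_C-E_{\mathrm{red}})$, which need not vanish even when $d>2g-2$. Concretely, for $g=3$ and $d=5>2g-2$, at $(q,q,q,q,q)\in C^5$ the image of the differential is a single line inside the three-dimensional $H^1(C,\cO_C)$. Consequently $\Psi_C^{-1}(0)$ is not smooth, and your last paragraph --- which infers that a general point of each component has distinct coordinates from smoothness and equidimensionality of the \emph{full} zero locus --- rests on this false premise; even granting equidimensionality, one must still rule out components lying entirely inside the diagonal locus.

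The gap is repairable inside your framework precisely because the moduli problem only sees distinct points: the columns of $A$ have entries in $\{0,\pm1\}$, so all zeros and poles of the $f_i$ are simple and the marked points are pairwise distinct. Restrict $\Psi_C$ to the open set $U\subset C^m$ of pairwise distinct tuples; there each $E_i$ is reduced, support equals divisor, your differential computation is correct, and $\Psi_C|_U$ is a submersion, so $\Psi_C^{-1}(0)\cap U$ is smooth of pure dimension $m-rg$. What then still requires proof is nonemptiness of $\Psi_C^{-1}(0)\cap U$, i.e.\ the existence of $r+1$ pairwise disjoint reduced effective divisors in a single linear system $|L|$ of degree $d$: for $d\geq 2g$ every such $L$ is basepoint-free, for $d=2g-1$ a general one is (the exceptions are exactly $L=K_C+p$), and then Bertini in characteristic $0$ together with $\dim|L|=d-g\geq 1$ produces the required members. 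With these two corrections (and the standing assumption $g\geq 2$, so that $\operatorname{Aut}(C)$ is finite and $\dim\cM_g=3g-3$ --- an assumption the paper's own proof also makes silently), your fibration over $\cM_g$ yields $\dim \cM_g(A)=(3g-3)+(r+1)d-rg$ as claimed.
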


\begin{proof}
    We have a morphism $\phi: \cM_g(A) \rightarrow \cM_g(\PP^r, d)$ given as follows. Given an element 
    $$(C, p, f_1, \ldots, f_r) \in \cM_g(A)$$ 
    we obtain a rational function $C \dashrightarrow \mathbb{G}_m^r$ by sending $p \mapsto (f_1(p),\ldots,f_r(p))$. Since the curve is smooth, this can be completed to a morphism $C \rightarrow \PP^r$. By the choice of $A$, this has degree $d$, and we obtain a point of $\cM_g(\PP^r, d)$. So $\phi$ is forgetting the marked points $p_1, \ldots, p_n$. 
    
    Claim that $\phi$ is \'etale. We use the infinitesimal lifting property. The image of $\cM_g(A)$ in $ \cM_g(\PP^r, d)$ consists of $(C, f: C \rightarrow \PP^r)$ such that $f$ meets the coordinate axes of $\PP^r$ transversely. A lift of $(C, f) \in \cM_g(\PP^r, d)$ is given by labelling the points of $C$ which are mapped to coordinate axes of $\PP^r$. A deformation of $f$ induces a deformation of these distinct points which is unique once a labelling has been chosen.


    It suffices to calculate the dimension of $\cM_g(\PP^r, d)$. This is a standard result that we will review. The forgetful map $\mathcal{M}_g(\PP^r, d) \rightarrow \mathcal{M}_g$ to the space of curves of genus $g$ factorizes through $Pic^d(\mathcal{C}/\mathcal{M}_g)$, the universal Picard variety over the moduli space of curves. The fiber over any point $[X] \in \cM_g$ is $Pic^d(X)$, the line bundles of degree $d$ up to isomorphism. 

    \begin{center}
        \begin{tikzcd}
        & {\mathcal{M}_g(\PP^r, d)} \arrow[dd] \arrow[ld] \\
        Pic^d(\mathcal{C}/\mathcal{M}_g) \arrow[rd] & \\
        & \mathcal{M}_g                                         
        \end{tikzcd}
    \end{center}

    The dimension of $\mathcal{M}_g$ is known to be $3g-3$, while the dimension of $Pic^d(\mathcal{C}/\mathcal{M}_g)$ is $4g-3$. The tangent space of a scheme $X$ is comprised of the $k[\epsilon]/(\epsilon^2) = k[\epsilon]$-valued points. This corresponds to the first order deformations, i.e. flat $k[\epsilon]$-scheme $\mathcal{X}$ with an isomorphism $X \cong \mathcal{X} \times_{k[\epsilon]} \spec k$. 
    
    
    A point in $\mathcal{M}_g(\PP^r, d)$ is given by $[C, L, f_0,\ldots,f_r \in H^0(C, L)]$, up to scaling all of the $f_i$, where we want $L$ to correspond to a divisor of $n$ distinct points, $f_i$ to be non-zero at these points, and to have distinct zeros elsewhere. Since these are open conditions, they do not change the dimension. We want to understand the \emph{vertical} tangent space at this point, i.e. which tangent vectors  go to zero when composed with the map to $Pic(\mathcal{C}/\mathcal{M}_g)$. These correspond to those deformations which fix the curve and the line bundle.

    Such deformations are given by $\tilde{f}_i \in H^0(C,L) \otimes k[\epsilon]$, so are of the form $f_i + \epsilon g_i$ with $g_i \in H^0(C,L)$. The $g_i$ that scale the functions give a trivial deformation since we are working in projective space. Hence the vertical tangent space to $p \in \mathcal{M}_g(\PP^r, d)$ has dimension $\oplus_{i=1}^r H^0(C, L)-1$. If $d > 2g-2$, then (by Riemann Roch) this is constant, equal to $(r+1)(d-g+1) - 1$. Putting this together, we obtain that the dimension of $\mathcal{M}_g(\PP^r, d)$ is 

    $$4g-3+(r+1)(d-g+1)-1$$
    which rearranges to give the desired result. 
\end{proof}

Consider the forgetful morphism $\cM_g(A) \rightarrow X = \bar{\cM}_{g,n}$. Its image, say $Y$, is closed in $X$, and the fibers over the image are $r$ dimensional (the group $\mathbb{G}_m^r$ acts on them by translation). So by Lemma \ref{lem:dim}, 
$$\dim Y = \dim \cM_g(A) - r = (r+1)d +(r-3)(1-g)-r$$
Suppose we have a parametrized tropical curve $\varphi$ of degree $d$ (so $d$ rays in each of the directions $e_1, \ldots , e_r$ and $-(e_1+\ldots +e_r)$) which contains $\varphi_3$ (resp. $\varphi_4$) as a core neighbourhood and has degree at least 5 (resp. 7). Then $\varphi$ is superabundant. For these values of $d$, we have that $\dim Y$ is the expected dimension, which is now strictly less than the dimension of the deformation space of $\varphi$. By \cite[Theorem 1.1]{U15} we have
$$\dim \trop_X (Y) \leq \dim Y< \dim \mathsf{Def}(\varphi)$$ 
and so the realizable curves form a cone with dimension strictly less than that of the ambient space. Hence a generic deformation of $\varphi$ is not realizable, as required.

\bibliographystyle{siam} 
\bibliography{SuperabundantConstructions}

\end{document}